\title{Weak$^*$ derived sets of convex sets in duals of non-reflexive spaces}
\author[Z. Silber]{Zdeněk Silber}
\email{zdesil@seznam.cz}
\keywords{weak$^*$ derived set, weak$^*$ sequential closure, convex set, non-reflexive Banach space}
\address{Department of Mathematical Analysis, Faculty of Mathematics and Physics, Charles University, Sokolovská 83, 186 75, Praha 8, Czech Republic}
\subjclass[2010]{46B10; 46A55}
\newtheorem*{thm}{Theorem}
\newtheorem{theorem}{Theorem}
\newtheorem{lemma}[theorem]{Lemma}
\newtheorem{prop}[theorem]{Proposition}
\newtheorem*{question}{Question}
\newcommand{\norm}[1]{\left\lVert#1\right\rVert}
\begin{document}

\begin{abstract}
    We investigate weak$^*$ derived sets, that is the sets of weak$^*$ limits of bounded nets, of convex subsets of duals of non-reflexive Banach spaces and their possible iterations.
    We prove that a dual space of any non-reflexive Banach space contains convex subsets of any finite order and a convex subset of order $\omega + 1$.
\end{abstract}

\maketitle

\section{Introduction and formulation of main results}

Let $A$ be a subset of a dual Banach space $X^*$. The weak$^*$ derived set $A^{(1)}$ of the set $A$ is the set of all weak$^*$ limits of bounded convergent nets in $A$, i.e.
\begin{equation*}
    A^{(1)} = \bigcup_{n=1}^\infty \overline{A \cap n B_{X^*}}^{w^*},
\end{equation*}
where $B_{X^*}$ denotes the closet unit ball of $X^*$. If $X$ is separable, bounded sets of $X^*$ are metrizable and the weak$^*$ derived set $A^{(1)}$ coincides with the weak$^*$ sequential closure of $A$.

Recall that a Banach space $X$ is called quasi-reflexive if its canonical embedding into its bidual $X^{**}$ is of finite codimension. All reflexive spaces are also quasi-reflexive and there are non-reflexive quasi-reflexive spaces, e.g. the James' space \cite{james1951}. Reflexivity or quasi-reflexivity of $X$ is closely related to the behaviour of weak$^*$ derived sets of convex subsets of $X^*$:

\begin{thm}
    Let $X$ be a Banach space.
    \begin{enumerate}
        \item $X$ is reflexive if and only if $A^{(1)} = \overline{A}^{w^*}$ for every convex set $A \subseteq X^*$.
        \item $X$ is quasi-reflexive if and only if $A^{(1)} = \overline{A}^{w^*}$ for every subspace $A \subset \subset X^*$.
    \end{enumerate}
\end{thm}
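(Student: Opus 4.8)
The plan is to prove each equivalence as two implications; in both, the direction ``[quasi-]reflexive $\Rightarrow$ equality'' is soft, and the converse needs a counterexample. Throughout I use that $A\subseteq A^{(1)}\subseteq\overline{A}^{w^*}$ always, so $A^{(1)}=\overline{A}^{w^*}$ just means $A^{(1)}$ is weak$^*$ closed. For (1): if $X$ is reflexive the weak$^*$ and weak topologies on $X^*$ coincide, so Mazur's theorem gives $\overline{A}^{w^*}=\overline{A}^{w}=\overline{A}^{\norm{\cdot}}$ for convex $A$; and if $f$ is in this norm closure, a norm-convergent sequence $f_k\in A$ with $f_k\to f$ is bounded, say by $n$, whence $f\in\overline{A\cap nB_{X^*}}^{\norm{\cdot}}\subseteq\overline{A\cap nB_{X^*}}^{w^*}\subseteq A^{(1)}$. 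So $A^{(1)}=\overline{A}^{w^*}$.

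For the forward direction of (2), let $X$ be quasi-reflexive of order $k$ and $E\subseteq X^*$ a subspace. First, $E^{(1)}$ is again a subspace (scalar multiples and, by joint weak$^*$-continuity of addition, sums of bounded nets in $E$ are again bounded nets in $E$), and $\overline{E}^{\norm{\cdot}}\subseteq E^{(1)}$ by the approximation argument above. Replacing $X$ by $X/E_\perp$ — quotients of quasi-reflexive spaces are quasi-reflexive, and $(E_\perp)^\perp\cong(X/E_\perp)^*$ carries the matching norm and weak$^*$ topology — we may assume $E$ is weak$^*$ dense, so $\overline{E}^{w^*}=X^*$ and $E^\perp\cap\kappa(X)=\{0\}$, and hence $E^\perp$ embeds into the finite-dimensional $X^{**}/\kappa(X)$. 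The key point is that \emph{such an $E$ is norming}: writing $\norm{x}_E:=\sup\{|\langle f,x\rangle|:f\in E,\norm{f}\le1\}=\operatorname{dist}_{X^{**}}(\kappa(x),E^\perp)$, if $\norm{x_n}=1$ and $\norm{x_n}_E\to0$ one picks bounded $\eta_n\in E^\perp$ with $\norm{\kappa(x_n)-\eta_n}\to0$, passes to a subsequence along which $\eta_n$ converges in the finite-dimensional $E^\perp$, and finds that $\kappa(x_n)$ converges in norm into $E^\perp\cap\kappa(X)=\{0\}$, contradicting $\norm{x_n}=1$. A bipolar computation gives $\overline{E\cap nB_{X^*}}^{w^*}=\{f\in X^*:|\langle f,x\rangle|\le n\norm{x}_E\text{ for all }x\}$, so $E^{(1)}=\bigcup_n\overline{E\cap nB_{X^*}}^{w^*}$ equals $X^*$ if and only if $E$ is norming (the nontrivial direction via uniform boundedness). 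As $E$ is norming, $E^{(1)}=\overline{E}^{w^*}$.

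For the converses: if $X$ is not quasi-reflexive, $X^{**}/\kappa(X)$ is infinite-dimensional, so choose $x_n\in X$, $\norm{x_n}=1$, with no weak cluster point, and $\theta_n\in X^{**}$, $\norm{\theta_n}=1$, with the classes $\theta_n+\kappa(X)$ linearly independent; set $\psi_n=\kappa(x_n)+\tfrac1{2n}\theta_n$ and $E={}^{\perp}\{\psi_n\}=\bigcap_n\ker\psi_n$. With some care — essentially producing a quasi-complement of $\kappa(X)$ in $X^{**}$ — one verifies $E$ is weak$^*$ dense, while $\norm{x_n}_E\le\norm{\kappa(x_n)-\psi_n}=\tfrac1{2n}\to0$ although $\norm{x_n}=1$, so $E$ is \emph{not} norming, and the same bipolar formula forces $E^{(1)}\subsetneq X^*=\overline{E}^{w^*}$. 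This also handles the converse of (1) when $X$ is not quasi-reflexive. The remaining case of (1), $X$ non-reflexive but quasi-reflexive (e.g. the James space), is different: by part (2) no \emph{subspace} of $X^*$ witnesses the failure, so one needs a convex, non-affine set. Starting from a James biorthogonal system $(x_n)\subseteq B_X$, $(f_n)\subseteq B_{X^*}$ (with $\langle f_i,x_j\rangle=1$ for $i\le j$ and $0$ for $i>j$), I would take $A$ to be the convex hull of pieces of $B_{X^*}$ scaled to infinity in the directions $f_n$, tuned so that a suitable point $g$ lies in $\overline{A}^{w^*}$ while each truncation $A\cap nB_{X^*}$ is weak$^*$-separated from $g$ by an element of $X$.

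I expect this last construction to be the main obstacle. Subspaces do not help, and the obvious convex candidates fail: for $\phi\in X^{**}\setminus\kappa(X)$, the half-space $\{f:\langle\phi,f\rangle\ge c\}$ and the hyperplane $\{f:\langle\phi,f\rangle=c\}$ have each bounded truncation weak$^*$-dense in the corresponding part of $X^*$, since such $\phi$ have weak$^*$-dense sublevel sets inside every ball. Thus $A$ must ``escape to infinity'' precisely in the direction that creates the new weak$^*$ limit point, and the real work is to verify at once that $g\in\overline{A}^{w^*}$ and $g\notin\overline{A\cap nB_{X^*}}^{w^*}$ for every $n$ — a Hahn–Banach separation in $X$ combined with the staircase identities of the James system.
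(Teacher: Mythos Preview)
Your handling of the forward implications and of part~(2) follows the same route the paper indicates: Mazur for ``reflexive $\Rightarrow$ equality'' in (1), and the Davis--Lindenstrauss norming-subspace machinery for both directions of (2). The paper does not write these out either; it simply points to the same sources. Your converse of (2) is a bit loose --- for $E={}^{\perp}\{\psi_n\}$ to be weak$^*$ dense you need the weak$^*$-closed span of the $\psi_n$ in $X^{**}$ to meet $\kappa(X)$ only at $0$, and nothing in your choice of $x_n,\theta_n$ guarantees this (the ``no weak cluster point'' condition on $x_n$ is neither sufficient nor the right hypothesis). But the Davis--Lindenstrauss construction does produce such an $E$, so this is a repairable omission rather than a wrong idea.

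The genuine gap is the converse of (1) in the quasi-reflexive, non-reflexive case. You correctly isolate it as the crux and correctly observe that no subspace, hyperplane, or half-space can witness the failure; but what follows is not a proof. ``Convex hull of pieces of $B_{X^*}$ scaled to infinity in the directions $f_n$, tuned so that\dots'' names a hope, not a set, and the two verifications you list --- $g\in\overline{A}^{w^*}$ yet $g\notin\overline{A\cap nB_{X^*}}^{w^*}$ for every $n$ --- are precisely the content that requires a concrete construction. The construction the paper cites (Ostrovskii, whose $n=1$ case is the prototype for the sets $A_n$ built later in the paper) proceeds differently: by Singer's theorem every non-reflexive space contains a seminormalized basic sequence $(z_k)$ with uniformly bounded partial sums; in the positive cone $K\subseteq Z^*$ this yields the key $\ell^1$-type estimate $\norm{y^*}\ge C^{-1}\sum_k y^*(z_k)$. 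One then takes $A=\operatorname{conv}\{\alpha_j z_0^*+\beta_j z_j^*:j\ge 1\}$ with $\alpha_j\nearrow 1$ and $\beta_j\nearrow\infty$. Since $(z_j^*)$ is weak$^*$ null, $z_0^*\in\overline{A}^{w^*}$; but any bounded piece $A\cap nB_{Z^*}$ forces, via the estimate above, most of the convex weight onto small $j$, where $\alpha_j$ is bounded away from $1$, so the $z_0^*$-coordinate stays strictly below $1$ and $z_0^*\notin A^{(1)}$. Your James biorthogonal system is morally the dual object to a basis with bounded partial sums, but it does not by itself give the norm lower bound on $K$ that drives the separation; without that estimate the ``Hahn--Banach separation combined with the staircase identities'' you allude to has nothing to bite on.
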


The proof of \textit{(2)}, using the notion of norming subspaces, can be done using the results of \cite{DavisLinden}. The implication from left to right of \textit{(1)} can be easily shown using the Mazur's theorem. The other implication, i.e. the existence of a convex subset $A$ of the dual space of every non-reflexive space for which $A^{(1)} \subsetneq \overline{A}^{w^*}$, is shown in \cite{ostro11}. In \cite{ostro11} Ostrovskii also proved a stronger version of \textit{(2)}: A Banach space $X$ is quasi-reflexive if and only if $A^{(1)} = \overline{A}^{w^*}$ for every absolutely convex set $A \subseteq X^*$.

A convex subset $A$ of a dual Banach space $X^*$ is weak$^*$ closed if and only if it equals its weak$^*$ derived set, i.e. $A = \overline{A}^{w^*}$ if and only if $A = A^{(1)}$. This is a formulation of the Krein-Šmulyan theorem. This inspires the definition of weak$^*$ derived sets of higher orders: For a successor ordinal $\alpha$, the weak$^*$ derived set of $A$ of order $\alpha$ is $A^{(\alpha)} =\left(A^{(\alpha - 1)}\right)^{(1)}$. For a limit ordinal $\alpha$ we define $A^{(\alpha)} = \bigcup_{\beta < \alpha} A^{(\beta)}$. The order of $A$ is the least ordinal $\alpha$, such that $A^{(\alpha)} = A^{(\alpha + 1)}$. We use the convention that $A^{(0)} = A$.

In \cite{ostro91} it is shown that for every non-quasi-reflexive separable Banach space $X$ and every countable ordinal $\alpha$ we can find a subspace $A \subset \subset X^*$ of order $\alpha + 1$. It also holds, that in separable Banach spaces countable non-limit ordinals are the only possible orders of subspaces \cite{HumphreysSimpson}. This gives a complete description of possible orders of subspaces of duals of non-quasi-reflexive separable Banach spaces.

In this paper we prove some partial results regarding orders of convex subsets of duals of non-reflexive Banach spaces. The main results are:

\begin{thm}
    Let $X$ be a non-reflexive Banach space and $n \in \mathbb{N}$. Then there is a convex subset of $X^*$ of order $n$.
\end{thm}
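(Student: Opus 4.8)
The plan is to bootstrap from the known case $n=1$: by Theorem 1(1) (or rather its proof via \cite{ostro11}), non-reflexivity of $X$ gives a convex set $A \subseteq X^*$ with $A^{(1)} \subsetneq \overline{A}^{w^*}$, i.e. a convex set of order at least $2$ after one more step—but we need to control the order exactly and push it up to arbitrary finite $n$. The natural strategy is to realize the derived-set hierarchy geometrically inside $X^*$ using a single functional, or rather a biorthogonal-type system, coming from non-reflexivity. Concretely, since $X$ is non-reflexive, the canonical image $\widehat{X}$ is not weak$^*$ dense-in-the-Krein–Šmulyan-sense: there is $\Lambda \in X^{**} \setminus \widehat{X}$, and more usefully (by James or by a standard separation/Helly argument) one can find a bounded sequence $(x_k) \subseteq X$ and $f \in X^*$ such that $f(x_k) \to 1$ while $x_k \xrightarrow{w} 0$, or some similar asymptotic configuration. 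I would use such data to build, for each $n$, an explicit convex set $C_n \subseteq X^*$ — typically the convex hull of countably many "layers" of vectors $\{g_{i,j}\}$ scaled so that the $j$-th layer only becomes weak$^*$-reachable after $i$ iterations — so that $C_n^{(k)}$ strictly grows for $k = 0,1,\dots,n-1$ and then stabilizes.

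The key steps, in order, are: (i) extract from non-reflexivity the combinatorial gadget — a bounded sequence $(x_k)$ in $X$ and a functional $f \in X^*$ (or a finite/countable biorthogonal system) witnessing that weak and weak$^*$ limits "leak" through finitely many coordinates; (ii) for fixed $n$, define $C_n$ as a convex hull (not closed) of a carefully weighted countable family of vectors in $X^*$, with weights along two indices, one index $j$ controlling "how far out" (large norm) a point sits and the other index $i$ controlling "how many limits deep" it is; (iii) prove the \emph{upper bound} $C_n^{(n)} = \overline{C_n}^{w^*}$, i.e. that after $n$ derivations nothing new appears — here one shows every point of the weak$^*$ closure is already reached, using that the relevant nets live in $nB_{X^*}$ for appropriate scaling and that the Krein–Šmulyan theorem applies to the closed convex hull; (iv) prove the \emph{lower bound} $C_n^{(k)} \subsetneq C_n^{(k+1)}$ for $k < n$, by exhibiting, at each stage $k$, a specific point $p_k$ lying in $C_n^{(k+1)}$ (as the weak$^*$ limit of a bounded net — in the separable case a sequence — drawn from $C_n^{(k)}$) together with a separating functional in $X^{**}$ (built from the $x_k$'s and their weak limits) showing $p_k \notin C_n^{(k)}$.

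For the lower bound the essential point is that a bounded net in $X^*$ that weak$^*$-converges may do so "against" an element of $X^{**} \setminus \widehat X$, so its limit is invisible to $X$ and can escape $C_n^{(k)}$; iterating requires that $X^{**} \setminus \widehat X$ (or the relevant quotient datum) is "large enough" to supply a fresh obstruction at each of the $n$ stages, which for finite $n$ is automatic from a single $\Lambda \in X^{**}\setminus\widehat X$ by taking powers/products in a cleverly chosen coordinate system. I would handle the separable case first (where $A^{(1)}$ is the weak$^*$ sequential closure, so everything is with sequences and the bookkeeping is cleanest) and then reduce the general non-reflexive case to a separable subspace: a non-reflexive $X$ contains a separable non-reflexive subspace $Y$, and one lifts a convex set of order $n$ from $Y^*$ to $X^*$ via a norm-one extension/restriction argument, checking that derived sets are preserved under this lift.

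The step I expect to be the main obstacle is the \emph{upper bound} $C_n^{(n)} = \overline{C_n}^{w^*}$: it is easy to make derived sets grow, but ensuring they stabilize exactly at stage $n$ (and not sooner, and not later) requires precise control of the norms of points in $C_n^{(k)}$ — one must verify that every weak$^*$-convergent bounded net out of $C_n^{(k)}$ has limit already in $C_n^{(k)}$ once $k \ge n$, which amounts to a quantitative statement about how the two-index weights interact with the ball $mB_{X^*}$. Getting the weights right so that the hierarchy neither collapses early nor continues past $n$ is the delicate bookkeeping at the heart of the argument.
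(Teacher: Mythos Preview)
Your high-level architecture matches the paper's: extract a combinatorial gadget from non-reflexivity, build an explicit convex hull in the dual of a separable subspace, prove matching upper and lower bounds on the order, then lift to $X^*$ via the adjoint of the inclusion. The lift step is exactly the paper's Lemma~\ref{LemmaPrevod}, and you correctly identify the upper bound as the delicate part.

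However, the proposal stays at the level of a plan, and several of the concrete hints you give are either too weak or point in the wrong direction.

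\emph{The gadget.} Your step (i) is too vague. The paper does not use a single $\Lambda \in X^{**}\setminus \widehat X$ or a James-type sequence in $X$; it uses a seminormalized basic sequence $(z_k)$ in $X$ with \emph{bounded partial sums} (Singer--Pe{\l}czy\'nski). The payoff is Lemma~\ref{LemmaKuzel}: on the positive cone $K=\{z^*:z^*(z_k)\ge 0\}$ one has $\|z^*\|\ge C^{-1}\sum_k z^*(z_k)$. This $\ell^1$-type lower bound is what drives both directions of the argument; without it there is no mechanism to force unboundedness of approximating sequences.

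\emph{The construction.} A two-index family is not enough. For order $n+1$ the paper partitions $\mathbb N$ into a tree of depth $n+1$ and takes $A_n$ to be the convex hull of vectors $\alpha_{j_1}z^*_{i_n}+\sum_{k=1}^n \beta_{j_k}z^*_{j_{k+1}}$ indexed by branches $(i_n,j_1,\dots,j_{n+1})$. Each level of the tree corresponds to one iteration of the derived-set operator; two indices would give at best order $2$ or $3$.

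\emph{The lower bound.} Your plan to separate by functionals in $X^{**}\setminus\widehat X$ is not what the paper does, and in the quasi-reflexive case (the interesting one) $X^{**}/\widehat X$ is finite-dimensional, so the phrase ``taking powers/products'' of a single $\Lambda$ has no clear meaning and is unlikely to yield $n$ independent obstructions. The paper instead proves (Proposition~\ref{PropRovnostiAnm}) that membership in $A_n^{(m)}$ forces a hierarchy of coordinate equations to hold, and then shows that if the target point $z^*_{i_n}$ were already in $A_n^{(n)}$, the approximating sequence would have to carry mass at arbitrarily deep tree levels with large $\beta$-weights, forcing $\|x_i^*\|\to\infty$ via the cone estimate---contradicting boundedness.

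In short: the skeleton is right, but the proof lives in the choice of the basis-with-bounded-partial-sums gadget, the tree-indexed (not two-indexed) convex hull, and the ``equations survive $m$ derivations $\Rightarrow$ norm blowup'' mechanism. None of these are visible in your proposal, and the $X^{**}$-separation route you sketch does not obviously lead there.
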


\begin{thm}
    Let $X$ be a non-reflexive Banach space. Then there is a convex subset of $X^*$ of order $\omega + 1$.
\end{thm}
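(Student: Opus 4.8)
The plan is to ``stack'' the finite-order examples supplied by the preceding theorem. First I would revisit its proof and isolate a uniform version: a single non-reflexivity witness --- either a functional $\Lambda\in X^{**}\setminus X$, or, via James' characterization of reflexivity, sequences $(x_j)\subseteq B_X$ and $(\varphi_j)\subseteq B_{X^*}$ with $\varphi_j(x_k)=\theta$ for $j\le k$ and $\varphi_j(x_k)=0$ for $j>k$ --- together with, for every $n$, a convex set $C_n\subseteq X^*$ of order exactly $n$ whose defining data is ``supported'' on the $n$-th block of that sequence. The point of setting this up carefully is to control the whole strict chain $C_n=C_n^{(0)}\subsetneq C_n^{(1)}\subsetneq\dots\subsetneq C_n^{(n)}=C_n^{(n+1)}$ and the functionals separating its consecutive members, so that the blocks can later be combined without interfering. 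If $X$ is non-separable one may instead first pass to a separable non-reflexive subspace $Y\subseteq X$ (one exists since $B_X$ is not weakly compact), build the example in $Y^*$, and transport it through the weak$^*$-to-weak$^*$ continuous quotient map $X^*\to Y^*$; but since the preceding theorem already applies to every non-reflexive $X$, its construction should run directly in $X^*$.

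Next I would form the amalgamated set $C=\overline{\mathrm{conv}}\bigl(\{0\}\cup\bigcup_{n\in\mathbb N}T_n(C_n)\bigr)$, where each $T_n$ is an affine placement --- a small dilation composed with a translation by a vector $p_n\in X^*$ --- chosen so that: (a) the images $T_n(C_n)$ have ``almost disjoint supports'' with respect to the block structure, so that forming weak$^*$ derived sets creates no interaction across blocks and $C^{(k)}$ looks, block by block, like a faithful affine copy of $C_n^{(k)}$; (b) consequently $C^{(k)}\subsetneq C^{(k+1)}$ for every finite $k$, since block $k+1$ still witnesses the strict step $C_{k+1}^{(k)}\subsetneq C_{k+1}^{(k+1)}$ --- this already forces the order of $C$ to be at least $\omega$; and (c) the translations $p_n$ converge weak$^*$ to a single point $p$ that the construction places outside $\bigcup_n C^{(n)}=C^{(\omega)}$ but inside its weak$^*$ closure. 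Since $p$ will in fact be a weak$^*$ limit of a \emph{bounded} net lying in $C^{(\omega)}$, we get $p\in C^{(\omega+1)}\setminus C^{(\omega)}$, so the order of $C$ is at least $\omega+1$.

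For the matching upper bound I would show that $C^{(\omega+1)}$ is weak$^*$ closed, that is, $C^{(\omega+2)}=C^{(\omega+1)}$. The idea is to obtain an explicit description of $C^{(\omega)}$: after $\omega$ derivations every block has been ``fully derived'', so what remains should be an unbounded convex set all of whose relevant asymptotic directions are already present --- in other words $C^{(\omega)}$ behaves like the elementary convex sets of order $1$, whose weak$^*$ closure is obtained by adjoining only points that are weak$^*$ limits of bounded nets in the set (here $p$, together with its convex interaction with the already weak$^*$-closed ``bulk''). Hence $\bigl(C^{(\omega)}\bigr)^{(1)}=\overline{C^{(\omega)}}^{w^*}$ is weak$^*$ closed and the process stabilizes; in spirit this matches the reflexivity-type obstruction, since it is precisely non-reflexivity, used once more at stage $\omega$, that makes $C^{(\omega)}$ fail to be closed.

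The hard part is to tune the placements $T_n$ so that (b), (c) and the upper bound hold simultaneously: for each finite $k$ the $k$-th derived set must keep growing (so the blocks of index larger than $k$ must not be ``used up'' prematurely), the limit point $p$ must be unreachable at every finite stage --- which requires separating $p$ from each $C_n^{(n)}$, and the only functionals pointing in the relevant ``diagonal'' direction are those coming from non-reflexivity, so the James-type system must be threaded through all blocks at once --- and yet adjoining $p$ at stage $\omega+1$ must produce nothing new. Building one construction meeting all three constraints, and verifying the exact description of $C^{(\omega)}$, is where essentially all the work lies; the convexity bookkeeping, the ``almost disjoint support'' estimates, and (if needed) the separable reduction are comparatively routine.
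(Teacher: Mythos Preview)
Your outline is, at the architectural level, the same strategy the paper follows: build the finite-order examples $A_n$ on pairwise disjoint ``blocks'' of a single non-reflexivity witness, amalgamate them as $A=\operatorname{conv}\bigcup_n A_n$, and show that a suitable diagonal point lies in $A^{(\omega+1)}\setminus A^{(\omega)}$ while $A^{(\omega+1)}$ is already weak$^*$ closed.  The differences are in the technical scaffolding, and they matter.

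First, the paper does not use abstract ``affine placements'' $T_n$; instead it works entirely inside the positive cone $K=\{z^*:z^*(z_j)\ge 0\}$ of the dual of a space $Z$ with a seminormalized basis with bounded partial sums.  The cone is what makes your ``almost disjoint support'' idea actually go through: for $z^*\in K$ one has the absolutely convergent expansion $z^*=\sum z^*(z_j)z_j^*$ together with the lower norm bound $\|z^*\|\ge C^{-1}\sum z^*(z_j)$, and from this the paper proves (Lemma~\ref{LemmaKuzelRestrikce}) that restriction to a block $\mathbf N_n$ is well defined on $K$ and is weak$^*$-to-weak$^*$ continuous along bounded sequences.  That lemma is the engine behind the block-by-block description of $A^{(k)}$ (Lemma~\ref{LemmaARestrikce}).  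In your plan this step is asserted but not supplied, and without a cone-type structure it is genuinely unclear how to stop mass from leaking between blocks under weak$^*$ limits.

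Second, the paper's witness is not a limit of translations but the infinite convex combination $z^*=\sum_n 2^{-n}z_{i_n}^*$, where $z_{i_n}^*\in A_n^{(n+1)}\setminus A_n^{(n)}$; the restriction lemma then forces $z^*\restriction\mathbf N_{m+1}$ to be a positive multiple of something in $A_{m+1}^{(m)}$ if $z^*\in A^{(m)}$, contradicting the equations retained at level $m$ (Proposition~\ref{PropRovnostiAnm}).  Your translation scheme could in principle be made to play the same role, but you would still need the cone/restriction machinery to separate $p$ from every $C^{(m)}$.

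Finally, for the upper bound the paper does exactly what you suggest but makes it concrete: it writes down the set $B=\{\sum_n t_n x_n^*:x_n^*\in\overline{A_n}^{w^*},\,t_n\ge 0,\,\sum t_n\le 1\}$, shows $B\subseteq\overline{A^{(\omega)}}$ (using that $0\in A_n^{(n)}$), and then proves $B=B^{(1)}$ directly via the restriction lemma and Krein--\v Smulyan, whence $\overline{A}^{w^*}=\overline{A^{(\omega)}}=A^{(\omega+1)}$.  The transport to $X^*$ is, as you say, via the adjoint of the inclusion $Z\hookrightarrow X$ (Lemma~\ref{LemmaPrevod}).  So your plan is on target, but the missing ingredient that turns it into a proof is the positive-cone framework and its restriction lemma.
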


These results are proved below in Theorems \ref{ThmRadn} and \ref{ThmRadOmega}. Note that we can restrict ourselves to the case non-reflexive quasi-reflexive Banach spaces. In the case of reflexive spaces the only possible orders of convex sets are $0$, if the set is already weak$^*$ closed, or $1$, if the set is not weak$^*$ closed. The case of non-quasi-reflexive separable spaces is already solved in \cite{ostro91}. Both of these results use a modified construction of Ostrovskii used in \cite{ostro11}.

\section{Proofs of main results}

\begin{lemma} \label{LemmaPrevod}
    Let $X$ be a Banach space and $Z \subset \subset X$ its closed subspace. Denote by $E:Z \rightarrow X$ the identity embedding. Then for every ordinal $\alpha$ and $A \subseteq Z^*$ we have
    \begin{equation*}
        (E^*)^{-1}(A^{(\alpha)}) = ((E^*)^{-1}(A))^{(\alpha)}.
    \end{equation*}
\end{lemma}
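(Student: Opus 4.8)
The plan is to argue by transfinite induction on $\alpha$, with essentially all the content sitting in the base step $\alpha=1$. For $\alpha=0$ there is nothing to prove. For a limit ordinal $\alpha$ the identity is immediate, since $A^{(\alpha)}=\bigcup_{\beta<\alpha}A^{(\beta)}$ and taking preimages commutes with arbitrary unions, so the inductive hypotheses for all $\beta<\alpha$ give it at once. For a successor ordinal $\alpha=\beta+1$ one writes $A^{(\alpha)}=(A^{(\beta)})^{(1)}$, applies the case $\alpha=1$ to the set $A^{(\beta)}\subseteq Z^*$, and then invokes the inductive hypothesis for $\beta$; this yields $(E^*)^{-1}(A^{(\alpha)})=((E^*)^{-1}(A^{(\beta)}))^{(1)}=(((E^*)^{-1}(A))^{(\beta)})^{(1)}=((E^*)^{-1}(A))^{(\alpha)}$. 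So it remains to prove $(E^*)^{-1}(A^{(1)})=((E^*)^{-1}(A))^{(1)}$ for an arbitrary $A\subseteq Z^*$.

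For the inclusion $\supseteq$, if $f$ is the weak$^*$ limit of a bounded net $(h_i)$ in $(E^*)^{-1}(A)$, then $(E^*h_i)$ is a net in $A$ that is bounded (as $E^*$ is norm non-increasing) and converges weak$^*$ to $E^*f$ (as $E^*$ is weak$^*$-to-weak$^*$ continuous), whence $E^*f\in A^{(1)}$ and so $f\in(E^*)^{-1}(A^{(1)})$. For the inclusion $\subseteq$, take $f\in X^*$ with $E^*f=f|_Z\in A^{(1)}$, fix $n\in\mathbb{N}$ with $f|_Z\in\overline{A\cap nB_{Z^*}}^{w^*}$, and choose a net $(g_i)$ in $A\cap nB_{Z^*}$ with $g_i\to f|_Z$ in the $\sigma(Z^*,Z)$-topology. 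The task is to manufacture from $(g_i)$ a bounded net in $(E^*)^{-1}(A)$ converging weak$^*$ to $f$.

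I expect the boundedness to be the only real obstacle: an arbitrary net of extensions of the $g_i$ to $X$ need not be bounded even though the $g_i$ are, and Hahn--Banach extension is not weak$^*$-to-weak$^*$ continuous, so the convergence $g_i\to f|_Z$ cannot simply be lifted. The remedy is to extend minimally: let $\tilde h_i\in X^*$ be a Hahn--Banach extension of $g_i$ with $\norm{\tilde h_i}=\norm{g_i}\le n$. Then $(\tilde h_i)$ lies in the weak$^*$-compact ball $nB_{X^*}$, so some subnet $(\tilde h_{i_\lambda})$ converges weak$^*$ to some $\tilde h\in nB_{X^*}$. Restricting to $Z$: by continuity of $E^*$ one has $\tilde h|_Z=\lim_\lambda(\tilde h_{i_\lambda}|_Z)=\lim_\lambda g_{i_\lambda}$, and since $g_{i_\lambda}\to f|_Z$ and $(Z^*,\sigma(Z^*,Z))$ is Hausdorff, $\tilde h|_Z=f|_Z$. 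Hence $\psi_0:=f-\tilde h$ lies in $Z^\perp=\ker E^*$.

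Now set $h'_\lambda:=\tilde h_{i_\lambda}+\psi_0$. Then $E^*h'_\lambda=E^*\tilde h_{i_\lambda}=g_{i_\lambda}\in A$, so $h'_\lambda\in(E^*)^{-1}(A)$; the net $(h'_\lambda)$ is bounded by $n+\norm{\psi_0}$; and $h'_\lambda\to\tilde h+\psi_0=f$ weak$^*$. Therefore $f\in\overline{(E^*)^{-1}(A)\cap(n+\norm{\psi_0})B_{X^*}}^{w^*}\subseteq((E^*)^{-1}(A))^{(1)}$, which finishes the step $\alpha=1$ and with it the induction. The one point to keep an eye on is exactly this boundedness argument: it is used crucially that the defining net for $A^{(1)}$ may be taken bounded — this is precisely the role of the intersection with $nB_{Z^*}$ in the definition of the weak$^*$ derived set — so that the minimal extensions $\tilde h_i$ all land in one fixed weak$^*$-compact ball, and the unbounded discrepancy $f-\tilde h$ turns out to live in $Z^\perp$ and hence can be absorbed by a single translation without disturbing either the restriction to $Z$ or the boundedness.
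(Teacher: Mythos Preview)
Your proposal is correct and follows exactly the approach indicated in the paper: transfinite induction, with the case $\alpha=1$ carrying all the content. The paper itself does not spell out the $\alpha=1$ step but merely cites \cite[Lemma~1]{ostro11} and remarks that the general case follows by transfinite induction; you have supplied those details, and your Hahn--Banach extension plus compactness-and-translation argument for the inclusion $\subseteq$ is precisely the standard proof of that cited lemma.
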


This lemma is proved in \cite[Lemma 1]{ostro11} for $\alpha = 1$. For general $\alpha$ the lemma follows by transfinite induction. Note that the weak$^*$ derived set $A^{(\alpha)}$ is taken in $Z^*$ and $((E^*)^{-1}(A))^{(\alpha)}$ is taken in $X^*$.

\begin{lemma} \label{LemmaBaze}
    Let $X$ be a non-reflexive Banach space. Then $X$ contains a seminormalized basic sequence $(z_n)_{n=1}^\infty$ which has bounded partial sums.
\end{lemma}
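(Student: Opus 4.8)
The plan is to produce the sequence from James' characterisation of reflexivity and then repair it, by a Mazur-type selection, into a basic sequence without spoiling the two features we need. Since $X$ is non-reflexive, James' theorem supplies a number $\theta\in(0,1)$ and sequences $(x_n)_{n=1}^{\infty}\subseteq B_X$ and $(f_n)_{n=1}^{\infty}\subseteq B_{X^*}$ with $f_n(x_m)=\theta$ for $m\ge n$ and $f_n(x_m)=0$ for $m<n$. For any strictly increasing index sequence $m_1<m_2<\cdots$ the telescoping sequence $z_1:=x_{m_1}$, $z_k:=x_{m_k}-x_{m_{k-1}}$ ($k\ge 2$) automatically satisfies $\sum_{k=1}^{K}z_k=x_{m_K}\in B_X$, so its partial sums are bounded by $1$, and $\theta\le|f_{m_k}(z_k)|\le\|z_k\|\le 2$, so it is seminormalised. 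Thus the entire task is to choose the $m_k$ so that $(z_k)$ is in addition basic. The naive attempt — extracting a basic subsequence of $(x_n-x_{n-1})$ via the Bessaga–Pełczyński selection principle — is unavailable, because subsequencing the differences destroys the telescoping and hence the boundedness of the partial sums; overcoming this is the main obstacle.

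To deal with it, note that either $(x_n)$ has a weak-Cauchy subsequence, or, by Rosenthal's $\ell_1$-theorem, it has a subsequence equivalent to the unit vector basis of $\ell_1$. In the latter case $\ell_1$ embeds isomorphically into $X$ via some $T$, and then $Te_1,\,T(e_2-e_1),\,T(e_3-e_2),\dots$ is seminormalised, has partial sums $Te_K$, and is basic because $(e_1,e_2-e_1,e_3-e_2,\dots)$ is a monotone basic sequence in $\ell_1$ (a one-line Grünblum-type computation). So from now on, relabelling, we may assume $(x_n)$ itself is weak-Cauchy, still lying in $B_X$ and still satisfying the James relations with $(f_n)$.

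Now I would build $m_1<m_2<\cdots$ inductively in the spirit of the Mazur construction. Fix $\varepsilon_k>0$ with $\prod_{k}(1-\varepsilon_k)^{-1}\le 2$, put $m_1:=1$, and suppose $m_1<\cdots<m_k$ have been chosen; let $E_k:=\operatorname{span}(x_{m_1},\dots,x_{m_k})=\operatorname{span}(z_1,\dots,z_k)$. By compactness of the unit sphere of the finite-dimensional space $E_k$, choose finitely many $\psi_1,\dots,\psi_r\in B_{X^*}$ that $(1-\varepsilon_k/2)$-norm $E_k$. Because $(x_n)$ is weak-Cauchy, pick $m_{k+1}>m_k$ so large that $|\psi_i(x_{m_{k+1}})-\psi_i(x_{m_k})|$ is as small as needed for all $i$; then $z_{k+1}:=x_{m_{k+1}}-x_{m_k}$ has $|\psi_i(z_{k+1})|$ tiny. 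The functional $\eta_k:=\theta^{-1}f_{m_k+1}$ satisfies $\|\eta_k\|\le\theta^{-1}$ and $\eta_k(z_{k+1})=1$, so the rank-one corrections $\psi_i-\psi_i(z_{k+1})\eta_k$ have norm arbitrarily close to $1$ and, after renormalising, give functionals $\psi_i'\in B_{X^*}$ that still $(1-\varepsilon_k)$-norm $E_k$ and satisfy $\psi_i'(z_{k+1})=0$. Consequently, for every $y\in\operatorname{span}(z_1,\dots,z_k)$ and every scalar $a$,
\[
\|y+az_{k+1}\|\;\ge\;\max_i\bigl|\psi_i'(y+az_{k+1})\bigr|\;=\;\max_i\bigl|\psi_i'(y)\bigr|\;\ge\;(1-\varepsilon_k)\|y\|.
\]

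Finally, applying this inequality with $y=\sum_{i=1}^{k}a_iz_i$ and $a=a_{k+1}$ and chaining over $k$ gives $\bigl\|\sum_{i=1}^{n}a_iz_i\bigr\|\le\bigl(\prod_{k}(1-\varepsilon_k)^{-1}\bigr)\bigl\|\sum_{i=1}^{m}a_iz_i\bigr\|\le 2\bigl\|\sum_{i=1}^{m}a_iz_i\bigr\|$ for all $n\le m$ and all scalars, so by the standard criterion for basic sequences $(z_k)$ is basic (with basis constant at most $2$). Together with $\sup_K\|\sum_{k=1}^{K}z_k\|\le 1$ and $\theta\le\|z_k\|\le 2$ already noted, $(z_k)$ is exactly the sequence required. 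The one delicate point is keeping the corrected norming functionals inside (essentially) the unit ball, and this is precisely why the construction must be run along a weak-Cauchy sequence: it forces the correction term $\psi_i(z_{k+1})\eta_k$ to be genuinely small rather than merely of controlled size, which is what would defeat the chaining argument.
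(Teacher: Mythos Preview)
Your argument is correct. The James biorthogonal system gives the telescoping sequence with bounded partial sums and the seminormalisation for free, and your Mazur-type selection in the weak-Cauchy branch is sound: the crucial point---that $\eta_k=\theta^{-1}f_{m_k+1}$ vanishes on $E_k=\operatorname{span}(x_{m_1},\dots,x_{m_k})$---holds because $m_j<m_k+1$ for $j\le k$, so the corrected functionals $\psi_i'$ agree with $\psi_i$ on $E_k$, kill $z_{k+1}$, and stay essentially of norm one, which is exactly what the chaining needs. The $\ell_1$ branch is also fine; the differences $e_1,e_2-e_1,e_3-e_2,\dots$ form a monotone basic sequence in $\ell_1$ by the telescoping estimate $|a_n|\le\sum_{j=n}^{m-1}|a_j-a_{j+1}|+|a_m|$.

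The paper, however, does none of this: it simply quotes Singer's characterisation of reflexivity in spaces with a basis (a space with a basis is non-reflexive iff it contains a seminormalised block basic sequence with bounded partial sums) together with Pe\l czy\'nski's observation that every non-reflexive space contains a non-reflexive subspace with a basis. So the paper's ``proof'' is two citations, while yours is a self-contained construction invoking James' criterion and Rosenthal's $\ell_1$-theorem. Your route has the advantage of being explicit and of exhibiting directly why the James system is the right object (and, modulo Rosenthal, is arguably more elementary than tracking down the Singer--Pe\l czy\'nski chain); the paper's route is of course much shorter on the page and avoids the case split.
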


\begin{proof}
    This lemma is proved for a non-reflexive space with a basis in \cite[Theorem 3, ($1^\circ \Leftrightarrow 3^\circ$)]{singer1962}. As any non-reflexive space contains a non-reflexive subspace with a basis \cite[Theorem 1]{pelczynski1962}, the lemma follows.
\end{proof}

For the rest of this paper we will work with the Banach space $Z$ with seminormalized basis $(z_n)_{n=1}^\infty$ with bounded partial sums, i.e. there are constants $C,C_1,C_2>0$ such that $||\sum_{n=1}^N z_n || \leq C$ for all $N \in \mathbb{N}$ and $C_1 \leq ||z_n|| \leq C_2$ for all $n \in \mathbb{N}$. Let us denote by $(z_n^*)_{n=1}^\infty$ the biorthogonal functionals of $(z_n)_{n=1}^\infty$ and by $K$ the positive cone of $Z^*$. That is the weak$^*$ closed convex set
\begin{equation*}
    K = \{z^* \in Z^*; \; z^*(z_j) \geq 0 \text{ for each } j \in \mathbb{N}\}.
\end{equation*}

Note that as $Z$ is separable, weak$^*$ derived sets in $Z^*$ coincide with weak$^*$ sequential closures. Also note, that as the basis $(z_n)_{n=1}^\infty$ is seminormalized, we get that $z_n^* \overset{w^*}{\longrightarrow}0$.

\begin{lemma} \label{LemmaKuzel}
    For every $z^* \in K$ we have $z^* = \sum_{n=1}^\infty z^*(z_n) z_n^*$, where the series converges absolutely. Further, we have $||z^*|| \geq C^{-1} \sum_{n=1}^\infty z^*(z_n)$.
\end{lemma}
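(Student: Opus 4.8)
The plan is to establish the two assertions separately, using the bounded-partial-sums hypothesis crucially for the second one. First I would show that every $z^* \in K$ is represented by the series $\sum_{n=1}^\infty z^*(z_n) z_n^*$. Since $(z_n)$ is a basis, any $z^* \in Z^*$ is determined by its values on the $z_n$, and the partial-sum operators $S_N z = \sum_{n=1}^N z_n^*(z) z_n$ are uniformly bounded (by the basis constant); hence $z^* \circ S_N = \sum_{n=1}^N z^*(z_n) z_n^*$ converges to $z^*$ pointwise on $Z$, i.e. weak$^*$. To upgrade weak$^*$ convergence of the partial sums to absolute norm convergence, I would use positivity: for $z^* \in K$ all coefficients $z^*(z_n)$ are nonnegative, so $\sum_{n=1}^N z^*(z_n) z_n^* \leq \sum_{n=1}^{N'} z^*(z_n) z_n^*$ in the cone order for $N \le N'$, and more to the point the key computation is the norm estimate, which I turn to next and which will also yield summability.

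For the inequality $\norm{z^*} \geq C^{-1} \sum_{n=1}^\infty z^*(z_n)$, the idea is to test $z^*$ against the vectors $w_N = \sum_{n=1}^N z_n$, which by hypothesis satisfy $\norm{w_N} \le C$ for all $N$. Then
\begin{equation*}
    \sum_{n=1}^N z^*(z_n) = z^*\!\left(\sum_{n=1}^N z_n\right) = z^*(w_N) \le \norm{z^*}\,\norm{w_N} \le C\,\norm{z^*},
\end{equation*}
where the first inequality uses $z^*(w_N) \ge 0$ (again positivity, so that $z^*(w_N) = |z^*(w_N)|$) together with $\norm{w_N} \le C$. Since this bound is uniform in $N$ and all terms $z^*(z_n)$ are nonnegative, the series $\sum_{n=1}^\infty z^*(z_n)$ converges and satisfies $\sum_{n=1}^\infty z^*(z_n) \le C \norm{z^*}$, which rearranges to the claimed inequality.

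Finally, the absolute convergence of $\sum_{n=1}^\infty z^*(z_n) z_n^*$ now follows: $\sum_{n=1}^\infty \norm{z^*(z_n) z_n^*} = \sum_{n=1}^\infty z^*(z_n)\norm{z_n^*} \le C_2 \sup_m \norm{z_m^*}$--wait, more simply, $\norm{z_n^*}$ is bounded (the biorthogonal functionals of a basis with basis constant $\le C$ have $\norm{z_n^*} \le 2C/\norm{z_n} \cdot \norm{z_n}$, i.e. are uniformly bounded since $(z_n)$ is seminormalized), so $\sum_n z^*(z_n)\norm{z_n^*} \le (\sup_m \norm{z_m^*}) \sum_n z^*(z_n) < \infty$ by the previous paragraph. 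Hence the series converges in norm, a fortiori weak$^*$, and since we already identified its weak$^*$ limit as $z^*$, we get $z^* = \sum_{n=1}^\infty z^*(z_n) z_n^*$ with absolute convergence.

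I do not anticipate a serious obstacle here; the only point requiring care is making sure positivity is invoked exactly where needed — namely to pass from $z^*(w_N)$ to $|z^*(w_N)|$ so that the norm bound on $w_N$ applies, and to guarantee nonnegativity of the series terms so that a uniform bound on partial sums gives convergence — and to recall the standard fact that a seminormalized basic sequence has uniformly bounded biorthogonal functionals.
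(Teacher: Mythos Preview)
Your proposal is correct and follows essentially the same approach as the paper: test $z^*$ against the bounded partial sums $\sum_{n=1}^N z_n$ to obtain $\sum_{n=1}^N z^*(z_n) \le C\|z^*\|$, use positivity to pass to the infinite sum, deduce absolute convergence of $\sum z^*(z_n)z_n^*$ from the uniform bound on $\|z_n^*\|$, and identify the norm limit with $z^*$ via the already-known weak$^*$ convergence. The paper's write-up is terser (it leaves the boundedness of the biorthogonal functionals implicit), but the argument is the same; you might clean up the garbled bound for $\|z_n^*\|$ and drop the aside about cone order, which is not needed.
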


\begin{proof}
    For each $N \in \mathbb{N}$ we have
    \begin{equation*}
        ||z^*|| \geq C^{-1} z^* \left(\sum_{n=1}^N z_n \right) = C^{-1} \sum_{n=1}^N z^*(z_n).
    \end{equation*}
    Hence, as $z^*(z_n) \geq 0$ for each $n \in \mathbb{N}$, we get $||z^*|| \geq C^{-1} \sum_{n=1}^\infty z^*(z_n)$ and the series $\sum_{n=1}^\infty z^*(z_n) z_n^*$ converges absolutely in $Z^*$. As it also converges to $z^*$ in the weak$^*$ topology, we get that $\sum_{n=1}^\infty z^*(z_n) z_n^* = z^*$.
\end{proof}

Now, let us partition $\mathbb{N}$ into countably many subsequences: There will be the set $\mathbf{N}_0 = \{i_1 < i_2 < \dots \}$. Then for each $n \in \mathbb{N}$ there will be the set $\mathbf{N}(i_n)$, for each $j_1 \in \mathbf{N}(i_n)$ there will be the set $\mathbf{N}(i_n,j_1)$ and so on up to for each $j_n \in \mathbf{N}(i_n,j_1,\dots,j_{n-1})$ there will be the set $\mathbf{N}(i_n,j_1,\dots,j_n)$.

Fix a sequence of positive numbers $(\beta_k)_{k=1}^\infty$, such that $0\neq \beta_k \nearrow \infty$, and a sequence $(\alpha_k)_{k=1}^\infty$ of numbers in the interval $[0,1)$, such that for each $n \in \mathbb{N}$ we have that $(\alpha_{j_1})_{j_1 \in \mathbf{N}(i_n)}$ is a sequence increasing monotonically to $1$ with the first element equal to $0$. We will say that a finite sequence of positive integers $(n_1,\dots,n_k)$ is admissible if $n_1 \in \mathbf{N}_0$ and for each $2 \leq i \leq k$ we have that $n_i \in \mathbf{N}(n_1,\dots,n_{i-1})$.

For each $n \in \mathbb{N}$ define

\begin{align*}
    A_n &= \operatorname{conv} \left\{ \alpha_{j_1} z_{i_n}^* + \sum_{k=1}^n \beta_{j_k} z^*_{j_{k+1}}; \; (i_n,j_1,\dots,j_{n+1}) \text{ is admissible} \right\}
\end{align*}
and, moreover, define

\begin{align*}
    A &= \operatorname{conv} \bigcup_{n=1}^\infty A_n.
\end{align*}

Let us further denote by $\mathbf{N}_n$ the support of $A_n$, i.e.
\begin{align*}
    \mathbf{N}_n = \{i_n\} \cup \bigcup \{ \mathbf{N}(i_n,j_1)  \cup \mathbf{N}(i_n,j_1,j_2) \cup & \cdots \cup \mathbf{N}(i_n,j_1,\dots,j_{n}); \\
    & (i_n,j_1,\dots,j_n) \text{ is admissible} \}.
\end{align*}

Later we will prove that those $A_n$'s are the desired convex sets of order $n+1$ and $A$ is the desired convex set of order $\omega + 1$.

\begin{prop} \label{PropCharakterizaceAn}
    $A_n$ is the set of those $x^* \in Z^*$ which have finite support in $\mathbf{N}_n$ and which satisfy the following equations:
    \begin{align*}
        1 &= \sum_{\substack{j_1 \in \mathbf{N}(i_n) \\ j_2 \in \mathbf{N}(i_n,j_1)}} \frac{x^*(z_{j_2})}{\beta_{j_1}} \hfill & \\
        x^*(z_{i_n}) &= \sum_{\substack{j_1 \in \mathbf{N}(i_n) \\ j_2 \in \mathbf{N}(i_n,j_1)}} \frac{x^*(z_{j_2}) \alpha_{j_1}}{\beta_{j_1}} \hfill & \\
        x^*(z_{j_2}) &= \sum_{\substack{j_3 \in \mathbf{N}(i_n,j_1,j_2)}} \frac{x^*(z_{j_3}) \beta_{j_1}}{\beta_{j_2}} \hfill & j_1 \in \mathbf{N}(i_n), \; j_2 \in \mathbf{N}(i_n,j_1) \\
        x^*(z_{j_3}) &= \sum_{\substack{j_4 \in \mathbf{N}(i_n,j_1,j_2,j_3)}} \frac{x^*(z_{j_4}) \beta_{j_2}}{\beta_{j_3}} \hfill & j_1 \in \mathbf{N}(i_n), \dots, \; j_3 \in \mathbf{N}(i_n,j_1,j_2) \\
        & \vdots \\
        x^*(z_{j_n}) &= \sum_{\substack{j_{n+1} \in \mathbf{N}(i_n,j_1,\dots,j_n)}} \frac{x^*(z_{j_{n+1}}) \beta_{j_{n-1}}}{\beta_{j_n}} \hfill & j_1 \in \mathbf{N}(i_n),  \dots, \; j_n \in \mathbf{N}(i_n,j_1,\dots,j_{n-1}).
    \end{align*}
\end{prop}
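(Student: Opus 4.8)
The plan is to prove both inclusions. For the "$\subseteq$" direction, I would first verify that every generator $g = \alpha_{j_1} z_{i_n}^* + \sum_{k=1}^n \beta_{j_k} z_{j_{k+1}}^*$ (for an admissible sequence $(i_n,j_1,\dots,j_{n+1})$) satisfies all the listed equations. This is a direct computation: such a $g$ has support contained in $\{i_n, j_2, j_3, \dots, j_{n+1}\} \subseteq \mathbf{N}_n$, and the coordinates are $g(z_{i_n}) = \alpha_{j_1}$, $g(z_{j_{k+1}}) = \beta_{j_k}$ for $1 \le k \le n$, and $0$ elsewhere. Plugging these into the right-hand sides, each sum collapses to a single term because, for a fixed admissible string, only one continuation index in each $\mathbf{N}(i_n,j_1,\dots,j_m)$ carries a nonzero coordinate; e.g. the first equation reads $\sum \frac{g(z_{j_2})}{\beta_{j_1}} = \frac{\beta_{j_1}}{\beta_{j_1}} = 1$, and so on down the list. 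Since the solution set of this system of linear equations is convex and every generator lies in it, the convex hull $A_n$ lies in it as well; one should also note the set of finitely supported vectors satisfying the system is closed under convex combinations (the union of finite supports is still finite).

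For the "$\supseteq$" direction, suppose $x^* \in Z^*$ has finite support in $\mathbf{N}_n$ and satisfies all the equations; I must exhibit $x^*$ as a (finite) convex combination of generators. The idea is to reconstruct $x^*$ recursively along the tree structure. Starting from the top equation $1 = \sum \frac{x^*(z_{j_2})}{\beta_{j_1}}$ and nonnegativity of the coefficients (which must be checked — see below), one reads off a probability distribution over the pairs $(j_1, j_2)$; the second equation says the $z_{i_n}^*$-coordinate is the corresponding average of the $\alpha_{j_1}$'s. Then for each $(j_1,j_2)$ appearing, the equation for $x^*(z_{j_2})$ redistributes its mass over the children $j_3 \in \mathbf{N}(i_n,j_1,j_2)$, and inductively down to level $n+1$. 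Formally, I would define weights $\lambda_{(j_1,\dots,j_{n+1})} = \frac{x^*(z_{j_{n+1}})}{\beta_{j_n}}$ for the leaves and show, using the chain of equations read bottom-up, that these are nonnegative, sum to $1$, and that $\sum \lambda_{(j_1,\dots,j_{n+1})} \, g_{(i_n,j_1,\dots,j_{n+1})} = x^*$ coordinate by coordinate. The sum is finite because $x^*$ has finite support.

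The main obstacle is the nonnegativity of the reconstructed weights: the defining equations alone do not obviously force $x^*(z_{j_k}) \ge 0$, so I expect to need an additional observation. The natural fix is to prove the equivalent characterization by first intersecting with the positive cone $K$ — i.e. show that the equations, together with the implicit fact that generators lie in $K$, force any finitely supported solution into $K$ — or alternatively to strengthen the recursion: from the top equation and $\beta_{j_1} > 0$ one should argue that if some intermediate coordinate were negative, summing the corresponding sub-branch of equations and using $\beta_k \nearrow \infty$, $\alpha_k \in [0,1)$ would contradict another equation. I would handle this by showing directly that the linear system has all coordinates determined as nonnegative rational-coefficient combinations of the "free" leaf data, and that consistency of the whole system (the top equation equals $1 \ne 0$) rules out the degenerate solutions; once nonnegativity is in hand, the convex-combination identity is routine bookkeeping on the tree.
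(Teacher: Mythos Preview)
Your forward direction matches the paper exactly. For the reverse inclusion the paper does essentially the same thing but packages it as a tower of nested convex combinations: it defines auxiliary elements $x_{j_n}^*, x_{j_{n-1}}^*,\dots,x_{j_2}^*,y^*\in A_n$, each a convex combination of those at the previous level (using coefficients built from $c_{j_k}=x^*(z_{j_k})^{-1}$ when nonzero, $0$ otherwise), and then verifies $y^*=x^*$ coordinatewise via the same telescoping you describe. Unwinding that recursion recovers precisely your leaf weights $\lambda_{(j_1,\dots,j_{n+1})}=x^*(z_{j_{n+1}})/\beta_{j_n}$, so the two arguments are the same up to bookkeeping; your direct version is arguably more transparent.

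Your worry about nonnegativity is justified, and it is a gap in the paper's statement and proof as well. The equations alone pin down only the \emph{affine} hull of the generators among finitely supported vectors: already for $n=1$, pick two admissible triples $(i_1,a,j)$ and $(i_1,a',j')$ and set $x^*(z_j)=2\beta_a$, $x^*(z_{j'})=-\beta_{a'}$, $x^*(z_{i_1})=2\alpha_a-\alpha_{a'}$; this has finite support in $\mathbf N_1$ and satisfies both listed equations, yet lies outside $K\supseteq A_1$. The paper's line ``the fact that they are elements of $A_n$ follows from convexity of $A_n$ and the choice of $c_{j_k}$'' tacitly assumes the combination coefficients are nonnegative, which the equations do not force. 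The clean repair is the first one you propose: add the hypothesis $x^*\in K$ to the characterization. Every later use of this proposition in the paper (in particular in Lemma~\ref{LemmaOrderAnShora}, where the constructed $y^*(l_1,\dots,l_n)$ visibly lie in $K$) applies it only to vectors in $K$, so nothing downstream is affected. Your alternative idea of deducing nonnegativity from the equations together with $\beta_k\nearrow\infty$ cannot succeed, since the counterexample above involves only finitely many coordinates and places no constraint on the growth of the $\beta_k$.
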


\begin{proof}
    Each element of $A_n$ has finite support in $\mathbf{N}_n$ and satisfies the required equations, as the vectors $\alpha_{j_1} z^*_{i_n} + \sum_{k=1}^n \beta_{j_k} z^*_{j_{k+1}}$ satisfy them and the validity of these equations is preserved by taking convex combinations. To prove the converse inclusion, let us have $x^* \in Z^*$ with finite support in $\mathbf{N}_n$ and satisfying these equations. Set $c_{j_k} = x^*(z_{j_k})^{-1}$, if $x^*(z_{j_k}) \neq 0$, and $c_{j_k} = 0$ otherwise. Then for each admissible $(i_n,j_1,\dots,j_n)$ we have
    \begin{align*}
        x_{j_n}^* &:= \sum_{\substack{j_{n+1} \in \mathbf{N}(i_n,j_1,\dots,j_n)}} \frac{c_{j_n} x^*(z_{j_{n+1}}) \beta_{j_{n-1}}}{\beta_{j_n}}  \left( \alpha_{j_1} z_{i_n}^* + \sum_{k=1}^n \beta_{j_k} z^*_{j_{k+1}} \right) \in A_n \\
        x_{j_{n-1}}^* &:= \sum_{\substack{j_{n} \in \mathbf{N}(i_n,j_1,\dots,j_{n-1})}} \frac{c_{j_{n-1}} x^*(z_{j_{n}}) \beta_{j_{n-2}}}{\beta_{j_{n-1}}}  x_{j_n}^* \in A_n \\
        &\vdots \\
        x_{j_2}^* &:= \sum_{\substack{j_3 \in \mathbf{N}(i_n,j_1,j_2)}} \frac{c_{j_2} x^*(z_{j_3}) \beta_{j_1}}{\beta_{j_2}}  x_{j_3}^* \in A_n \\
        y^* &:= \sum_{\substack{j_1 \in \mathbf{N}(i_n) \\ j_2 \in \mathbf{N}(i_n,j_1)}} \frac{x^*(z_{j_2})}{\beta_{j_1}} x_{j_2}^* \in A_n.
    \end{align*}
    The fact that they are elements $A_n$ follows from convexity of $A_n$ and the choice of $c_{j_k}$. Hence, we just need to show that $y^* = x^*$. If $m \notin \mathbf{N}_n$ we have that $x^*(z_m) = y^*(z_m) = 0$. Let $m \leq n$ and fix an admissible $(i_n,j_1,\dots,j_m)$. Then for each admissible $(i_n,\widetilde{j_1},\dots,\widetilde{j_n})$ we have
    \begin{align*}
        x^*_{\widetilde{j_n}}(z_{j_m}) = \sum_{\substack{\widetilde{j_{n+1}} \in \mathbf{N}(i_n,\widetilde{j_1},\dots,\widetilde{j_n})}} \frac{c_{\widetilde{j_n}} x^*(z_{\widetilde{j_{n+1}}}) \beta_{\widetilde{j_{n-1}}}}{\beta_{\widetilde{j_n}}} \beta_{j_{m-1}}
    \end{align*}
    if $\widetilde{j_m} = j_m$ (and therefore $\widetilde{j_i} = j_i$ for each $i \leq m$) and $x^*_{\widetilde{j_n}}(z_{j_m}) = 0$ otherwise. Hence, for each admissible $(i_n,\widetilde{j_1},\dots,\widetilde{j_{n-1}})$, we have
    \begin{align*}
        x^*_{\widetilde{j_{n-1}}}(z_{j_m}) = \sum_{\substack{\widetilde{j_n} \in \mathbf{N}(i_n,\widetilde{j_1},\dots,\widetilde{j_{n-1}}) \\ \widetilde{j_{n+1}} \in \mathbf{N}(i_n,\widetilde{j_1},\dots,\widetilde{j_n})}} \frac{c_{\widetilde{j_{n-1}}} x^*(z_{\widetilde{j_{n}}}) \beta_{\widetilde{j_{n-2}}}}{\beta_{\widetilde{j_{n-1}}}} \frac{c_{\widetilde{j_n}} x^*(z_{\widetilde{j_{n+1}}}) \beta_{\widetilde{j_{n-1}}}}{\beta_{\widetilde{j_n}}} \beta_{j_{m-1}}
    \end{align*}
    if $\widetilde{j_m} = j_m$ and $x^*_{\widetilde{j_{n-1}}}(z_{j_m}) = 0$ otherwise. Iterating this, we get for $m + 1 \leq k \leq n$ and admissible $(i_n,\widetilde{j_1},\dots,\widetilde{j_k})$
    \begin{align*}
        x^*_{\widetilde{j_{k}}}(z_{j_m}) = \sum_{\substack{\widetilde{j_{k+1}} \in \mathbf{N}(i_n,\widetilde{j_1},\dots,\widetilde{j_{k}}), \dots, \\ \widetilde{j_{n+1}} \in \mathbf{N}(i_n,\widetilde{j_1},\dots,\widetilde{j_{n}})}} \frac{c_{\widetilde{j_{k}}} x^*(z_{\widetilde{j_{k+1}}}) \beta_{\widetilde{j_{k-1}}}}{\beta_{\widetilde{j_{k}}}} \cdots \frac{c_{\widetilde{j_n}} x^*(z_{\widetilde{j_{n+1}}}) \beta_{\widetilde{j_{n-1}}}}{\beta_{\widetilde{j_n}}} \beta_{j_{m-1}}
    \end{align*}
    if $\widetilde{j_k} = j_k$ and $x^*_{\widetilde{j_{k}}}(z_{j_m}) = 0$ otherwise. Hence, we get
    \begin{align*}
        x_{j_m}^* (z_{j_m}) = \sum_{\substack{j_{m+1} \in \mathbf{N}(i_n,j_1,\dots,j_{m}), \dots, \\ j_{n+1} \in \mathbf{N}(i_n,j_1,\dots,j_{n})}} \frac{c_{j_m} x^*(z_{j_{m+1}}) \beta_{j_{m-1}}}{\beta_{j_m}} \cdots \frac{c_{j_n} x^*(z_{j_{n+1}})\beta_{j_{n-1}}}{\beta_{j_n}} \beta_{j_{m-1}}
    \end{align*}
    and for admissible $(i_n,\widetilde{j_1},\dots,\widetilde{j_m})$ such that $j_m \neq \widetilde{j_m}$ we get $x_{\widetilde{j_m}}^*(z_{j_m}) = 0$. We can then inductively prove that if $2 \leq k \leq m - 1$, the only admissible $(i_n,\widetilde{j_1},\dots,\widetilde{j_k})$ with nonzero $x^*_{\widetilde{j_k}}(z_{j_m})$ are the initial segments of $(i_n,j_1,\dots,j_m)$ and for them we have
    \begin{align*}
        x^*_{j_k}(z_{j_m}) = \sum_{\substack{j_{m+1} \in \mathbf{N}(i_n,j_1,\dots,j_{m}), \dots, \\ j_{n+1} \in \mathbf{N}(i_n,j_1,\dots,j_{n})}} \frac{c_{j_k} x^*(z_{j_{k+1}}) \beta_{j_{k-1}}}{\beta_{j_k}} \cdots \frac{c_{j_n} x^*(z_{j_{n+1}})\beta_{j_{n-1}}}{\beta_{j_n}} \beta_{j_{m-1}}.
    \end{align*}
    Then, as $c_{j_k} = x^*(z_{j_k})^{-1}$, we can finally show that
    \begin{align*}
        y^*(z_{j_m}) &= \sum_{\substack{j_{m+1} \in \mathbf{N}(i_n,j_1,\dots,j_{m}), \dots, \\ j_{n+1} \in \mathbf{N}(i_n,j_1,\dots,j_{n})}} \frac{x^*(z_{j_2})}{\beta_{j_1}} \frac{c_{j_2} x^*(z_{j_3}) \beta_{j_1}}{\beta_{j_2}} \cdots \frac{c_{j_n} x^*(z_{j_{n+1}})\beta_{j_{n-1}}}{\beta_{j_n}} \beta_{j_{m-1}} \\
        &= \sum_{\substack{j_{m+1} \in \mathbf{N}(i_n,j_1,\dots,j_{m}), \dots, \\ j_{n+1} \in \mathbf{N}(i_n,j_1,\dots,j_{n})}} \frac{\beta_{j_{m-1}}}{\beta_{j_n}} x^*(z_{j_{n+1}}).
    \end{align*}
    Now, by consecutive application of the equations of the proposition, we get 
    \begin{align*}
        x^*(z_{j_m}) &= \sum_{\substack{j_{m+1} \in \mathbf{N}(i_n,j_1,\dots,j_{m})}} \frac{x^*(z_{j_{m+1}}) \beta_{j_{m-1}}}{\beta_{j_m}} \\
        &= \sum_{\substack{j_{m+1} \in \mathbf{N}(i_n,j_1,\dots,j_{m}) \\ j_{m+2} \in \mathbf{N}(i_n,j_1,\dots,j_{m+1})}} \frac{x^*(z_{j_{m+2}}) \beta_{j_{m-1}}}{\beta_{j_{m}}} \frac{\beta_{j_{m}}}{\beta_{j_{m+1}}} \\ &=
        \sum_{\substack{j_{m+1} \in \mathbf{N}(i_n,j_1,\dots,j_{m}) \\ j_{m+2} \in \mathbf{N}(i_n,j_1,\dots,j_{m+1})}} \frac{x^*(z_{j_{m+2}}) \beta_{j_{m-1}}}{\beta_{j_{m+1}}} = \cdots \\
        & \cdots = \sum_{\substack{j_{m+1} \in \mathbf{N}(i_n,j_1,\dots,j_{m}), \dots, \\ j_{n+1} \in \mathbf{N}(i_n,j_1,\dots,j_{n})}} \frac{\beta_{j_{m-1}}}{\beta_{j_n}} x^*(z_{j_{n+1}}).
    \end{align*}
    Hence, $x^*(z_{j_m}) = y^*(z_{j_m})$. Analogically
    \begin{align*}
        x^*(z_{i_n}) = y^*(z_{i_n}) =\sum_{\substack{j_{m+1} \in \mathbf{N}(i_n,j_1,\dots,j_{m}), \dots, \\ j_{n+1} \in \mathbf{N}(i_n,j_1,\dots,j_{n})}} \frac{\alpha_{j_1}}{\beta_{j_n}} x^*(z_{j_{n+1}})
    \end{align*}
    and for admissible $(i_n,j_1,\dots,j_{n+1})$ we have that $x^*(z_{j_{n+1}}) = y^*(z_{j_{n+1}})$. Hence, $x^* = y^*$ and we are done.
\end{proof}

\begin{prop} \label{PropRovnostiAnm}
    Let $0 \leq m \leq n-1$ and $x^*$ be an element of $A_n^{(m)}$. Then $x^*$ satisfies the equations of Proposition \ref{PropCharakterizaceAn} possibly except for the equations on the bottom $m$ lines. Precisely:
    \begin{align*}
        1 &= \sum_{\substack{j_1 \in \mathbf{N}(i_n) \\ j_2 \in \mathbf{N}(i_n,j_1)}} \frac{x^*(z_{j_2})}{\beta_{j_1}} \\
        x^*(z_{i_n}) &= \sum_{\substack{j_1 \in \mathbf{N}(i_n) \\ j_2 \in \mathbf{N}(i_n,j_1)}} \frac{x^*(z_{j_2}) \alpha_{j_1}}{\beta_{j_1}}
    \end{align*}
    and for $2 \leq k \leq n-m$ and admissible $(i_n,j_1,\dots,j_k)$
    \begin{align*}
        x^*(z_{j_k}) &= \sum_{\substack{j_{k+1} \in \mathbf{N}(i_n,j_1,\dots,j_k)}} \frac{x^*(z_{j_{k+1}}) \beta_{j_{k-1}}}{\beta_{j_k}}.
    \end{align*}
\end{prop}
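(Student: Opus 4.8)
The plan is to prove the statement by induction on $m$. The case $m=0$ is exactly Proposition~\ref{PropCharakterizaceAn}, since every element of $A_n = A_n^{(0)}$ satisfies all of the listed equations. So assume $1 \le m \le n-1$ and that the assertion holds for $m-1$. Fix $x^* \in A_n^{(m)} = \bigl(A_n^{(m-1)}\bigr)^{(1)}$. By the definition of the weak$^*$ derived set there is an $M \in \mathbb{N}$ with $x^* \in \overline{A_n^{(m-1)} \cap M B_{Z^*}}^{w^*}$; since $Z$ is separable this bounded set is weak$^*$ metrizable, so $x^*$ is the weak$^*$ limit of a sequence $(x_\ell^*)_\ell$ in $A_n^{(m-1)}$ with $\norm{x_\ell^*} \le M$ for all $\ell$. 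I will use two standing observations. First, every generator of $A_n$ has nonnegative coordinates, hence lies in the positive cone $K$; as $K$ is weak$^*$ closed and convex, all weak$^*$ derived sets of $A_n$ remain inside $K$, so $x^*$ and each $x_\ell^*$ have nonnegative coordinates. Second, Lemma~\ref{LemmaKuzel} gives $\sum_j x_\ell^*(z_j) \le CM$ for every $\ell$. By the inductive hypothesis, each $x_\ell^*$ satisfies the first two equations of Proposition~\ref{PropCharakterizaceAn}, together with the ``level-$k$ equation'' $x_\ell^*(z_{j_k}) = \sum_{j_{k+1} \in \mathbf{N}(i_n,j_1,\dots,j_k)} \frac{x_\ell^*(z_{j_{k+1}})\beta_{j_{k-1}}}{\beta_{j_k}}$ (over all admissible $(i_n,j_1,\dots,j_k)$) for every $2 \le k \le n-m+1$.

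The crux is a uniform estimate on how much mass of $x_\ell^*$ can sit deep in the tree, obtained by substituting the equation \emph{one level below} the one we want to preserve into the $\ell^1$-type bound above. Fix $2 \le k \le n-m$ and an admissible $(i_n,j_1,\dots,j_k)$. Multiplying the level-$(k+1)$ equation (valid for $x_\ell^*$ since $k+1 \le n-m+1$) by $\beta_{j_{k+1}}$ and summing over $j_{k+1} \in \mathbf{N}(i_n,j_1,\dots,j_k)$ yields, using that distinct nodes carry disjoint index sets,
\begin{equation*}
    \sum_{j_{k+1} \in \mathbf{N}(i_n,j_1,\dots,j_k)} \beta_{j_{k+1}}\, x_\ell^*(z_{j_{k+1}}) \le CM\, \beta_{j_k},
\end{equation*}
so Markov's inequality gives $\sum_{\beta_{j_{k+1}} > R} x_\ell^*(z_{j_{k+1}}) \le CM \beta_{j_k}/R$ for every $R>0$, uniformly in $\ell$. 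Likewise, multiplying the level-$2$ equation by $\beta_{j_2}/\beta_{j_1}$ and summing over all admissible $(i_n,j_1,j_2)$ gives $\sum_{(i_n,j_1,j_2)} \frac{\beta_{j_2}}{\beta_{j_1}} x_\ell^*(z_{j_2}) \le CM$, hence $\sum_{\beta_{j_2} > R} \frac{x_\ell^*(z_{j_2})}{\beta_{j_1}} \le CM/R$ uniformly in $\ell$; this is the single place where the hypothesis $m \le n-1$ enters, since it is the level-$2$ equation of $x_\ell^*$ that is used. As $\beta_k \nearrow \infty$, the index sets $\{\beta_\bullet \le R\}$ are finite, so in every sum occurring in the equations the ``head'' over $\{\beta_\bullet \le R\}$ is a finite sum, on which $x_\ell^* \to x^*$ acts coordinatewise, while the ``tail'' is uniformly $O(1/R)$.

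It remains to pass each equation to the limit, which is now a routine split-and-estimate. For the first equation, the head bound together with $\sum_{(i_n,j_1,j_2)} \frac{x_\ell^*(z_{j_2})}{\beta_{j_1}} = 1$ forces $\sum_{(i_n,j_1,j_2)} \frac{x^*(z_{j_2})}{\beta_{j_1}} \ge 1 - CM/R$ for every $R$, while Fatou's lemma gives the reverse inequality $\le 1$; hence equality. For the second equation one uses $0 \le \alpha_{j_1} < 1$ to dominate the summands by $\frac{x_\ell^*(z_{j_2})}{\beta_{j_1}}$ and then the same tail bound, together with $x_\ell^*(z_{i_n}) \to x^*(z_{i_n})$. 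For a level-$k$ equation with $2 \le k \le n-m$ one uses the tail bound with the fixed factor $\beta_{j_{k-1}}/\beta_{j_k}$ and $x_\ell^*(z_{j_k}) \to x^*(z_{j_k})$. This gives all the asserted equations. The level-$(n-m+1)$ equation is genuinely lost: preserving it would require the level-$(n-m+2)$ equation for $x_\ell^*$, which the inductive hypothesis does not provide — and it is exactly this loss of one equation per step that will make the order of $A_n$ grow by one at each iteration. I expect the only real obstacle to be the one addressed in the previous paragraph: a priori the mass of $x_\ell^*$ could escape to infinity along the tree, so that the countable sums in the equations would be only weak$^*$ lower semicontinuous rather than continuous under the limit; the $\beta$-weighted bounds above, which convert ``one valid equation deeper'' into ``uniformly small tails'', are the device that rules this out.
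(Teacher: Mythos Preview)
Your proof is correct and rests on the same mechanism as the paper's: use the inductive hypothesis one level deeper (the level-$(k+1)$ equation for $x_\ell^*$) together with the $\ell^1$-type estimate of Lemma~\ref{LemmaKuzel} to prevent mass from escaping along the tree when passing to the weak$^*$ limit. The difference is purely in presentation. The paper argues by contradiction: it assumes an equation fails with gap $\delta>0$, splits the index set into $F_c=\{\beta_{j_{k+1}}\le c\}$ and $G_c$, and shows that the mass forced onto $G_c$ makes $\|x_\ell^*\|$ blow up. You run the same computation forwards: from $\|x_\ell^*\|\le M$ you extract the Markov-type tail bound $\sum_{\beta_{j_{k+1}}>R} x_\ell^*(z_{j_{k+1}})\le CM\beta_{j_k}/R$, then split head/tail and pass to the limit directly. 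Your packaging is a bit more economical and makes the role of the ``one extra equation'' transparent, but the two arguments are logically equivalent and use exactly the same ingredients.
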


\begin{proof}
    We shall proceed by induction. We have already shown in Proposition \ref{PropCharakterizaceAn} that the proposition holds for $m=0$. Now, let us suppose that the proposition holds for $m-1$ and take $x^* \in A_n^{(m)}$. There is a sequence $(x_i^*)_{i=1}^\infty$ in $A_n^{(m-1)}$, such that $x_i^* \overset{w^*}{\rightarrow} x^*$. Take admissible $(i_n,j_1,\dots,j_{k})$ where $k \leq n-m$. Suppose, for a contradiction, that
    \begin{align*}
        x^*(z_{j_k}) \neq \sum_{\substack{j_{k+1} \in \mathbf{N}(i_n,j_1,\dots,j_k)}} \frac{x^*(z_{j_{k+1}}) \beta_{j_{k-1}}}{\beta_{j_k}}.
    \end{align*}
    Take
    \begin{align*}
        \delta = x^*(z_{j_k}) - \sum_{\substack{j_{k+1} \in \mathbf{N}(i_n,j_1,\dots,j_k)}} \frac{x^*(z_{j_{k+1}}) \beta_{j_{k-1}}}{\beta_{j_k}}.
    \end{align*}
    By the induction hypothesis, as $x_i^* \in A^{(m-1)}_n$, we have
    \begin{align*}
        x_i^*(z_{j_{k+1}}) &= \sum_{\substack{j_{k+2} \in \mathbf{N}(i_n,j_1,\dots,j_{k+1})}} \frac{x_i^*(z_{j_{k+2}}) \beta_{j_{k}}}{\beta_{j_{k+1}}} \\
        x_i^*(z_{j_k}) &= \sum_{\substack{j_{k+1} \in \mathbf{N}(i_n,j_1,\dots,j_k)}} \frac{x_i^*(z_{j_{k+1}}) \beta_{j_{k-1}}}{\beta_{j_k}}.
    \end{align*}
    Hence, by Fatou's lemma, we get that $\delta \geq 0$ and as $\delta$ is nonzero we get $\delta > 0$.
    
    For $c>0$ take
    \begin{align*}
         F_c &= \{j_{k+1} \in \mathbf{N}(i_n,j_1,\dots,j_k); \; \beta_{j_{k+1}} \leq c\} \\ G_c &= \mathbf{N}(i_n,j_1,\dots,j_k) \setminus F_c.
    \end{align*}
    Then $F_c$ is a finite set and $x_i^* \overset{w^*}{\rightarrow} x^*$, therefore there is $i_0 \in \mathbb{N}$, such that for $i \geq i_0$ we have
    \begin{align*}
        \sum_{j_{k+1} \in F_c} \frac{x_i^*(z_{j_{k+1}}) \beta_{j_{k-1}}}{\beta_{j_k}} &< \sum_{j_{k+1} \in F_c} \frac{x^*(z_{j_{k+1}}) \beta_{j_{k-1}}}{\beta_{j_k}} + \delta/2 \\
        &\leq \sum_{j_{k+1} \in \mathbf{N}(i_n,j_1,\dots,j_k)} \frac{x^*(z_{j_{k+1}}) \beta_{j_{k-1}}}{\beta_{j_k}} + \delta/2 \\
        &= x^*(z_{j_k}) - \delta/2,
    \end{align*}
    and therefore
    \begin{align*}
        \sum_{\substack{j_{k+1} \in F_c \\ j_{k+2} \in \mathbf{N}(i_n,j_1,\dots, j_{k+1})}}  \frac{x_i^*(z_{j_{k+2}}) \beta_{j_{k-1}}}{\beta_{j_{k+1}}} = \sum_{j_{k+1} \in F_c} \frac{x_i^*(z_{j_{k+1}}) \beta_{j_{k-1}}}{\beta_{j_k}} < x^*(z_{j_k}) - \delta/2.
    \end{align*}
    Then there is $i_1 \geq i_0$ such  that
    \begin{align*}
        \sum_{\substack{j_{k+1} \in G_c \\ j_{k+2} \in \mathbf{N}(i_n,j_1,\dots, j_{k+1})}}  \frac{x_{i_1}^*(z_{j_{k+2}}) \beta_{j_{k-1}}}{\beta_{j_{k+1}}} = \sum_{j_{k+1} \in G_c} \frac{x_{i_1}^*(z_{j_{k+1}}) \beta_{j_{k-1}}}{\beta_{j_k}} > \delta/4,
    \end{align*}
    as otherwise $x_i^*(z_{j_k}) < x^*(z_{j_k}) - \delta/4$ for all $i \geq i_0$, which would contradict $x_i^* \overset{*}{\rightarrow} x^*$. But then it follows from Lemma \ref{LemmaKuzel} that
    \begin{align*}
         \norm{x_{i_1}^*} \geq C^{-1} \sum_{\substack{j_{k+1} \in G_c \\ j_{k+2} \in \mathbf{N}(i_n,j_1,\dots, j_{k+1})}} x_{i_1}^*(z_{j_{k+2}}) > C^{-1} \beta_{j_{k-1}}^{-1} c \; \delta/4.
    \end{align*}
    As $c>0$ was chosen arbitrarily, we get that $(x_i^*)_{i=1}^\infty$ is unbounded. But this contradicts the Banach-Steinhaus theorem. Hence,
    \begin{align*}
        x^*(z_{j_k}) = \sum_{\substack{j_{k+1} \in \mathbf{N}(i_n,j_1,\dots,j_k)}} \frac{x^*(z_{j_{k+1}}) \beta_{j_{k-1}}}{\beta_{j_k}}.
    \end{align*}
    
    Now suppose for a contradiction that
    \begin{align*}
        x^*(z_{i_n}) - \sum_{\substack{j_1 \in \mathbf{N}(i_n) \\ j_2 \in \mathbf{N}(i_n,j_1)}} \frac{x^*(z_{j_2}) \alpha_{j_1}}{\beta_{j_1}} = \delta \neq 0.
    \end{align*}
    By the same argument as above we get that $\delta > 0$. As $x_i^* \in A^{(m-1)}_n$, we get by the induction hypothesis that
    \begin{align*}
        x_i^*(z_{i_n}) &= \sum_{\substack{j_1 \in \mathbf{N}(i_n) \\ j_2 \in \mathbf{N}(i_n,j_1)}} \frac{x_i^*(z_{j_2}) \alpha_{j_1}}{\beta_{j_1}}
    \end{align*}
    and for admissible $(i_n,j_1,j_2)$
    \begin{align*}
        x_i^*(z_{j_2}) &= \sum_{\substack{j_{3} \in \mathbf{N}(i_n,j_1,j_2)}} \frac{x^*(z_{j_{3}}) \beta_{j_{1}}}{\beta_{j_2}}.
    \end{align*}
    Now, for $c>0$ set
    \begin{align*} 
        F_c &= \{ j_2; \; \beta_{j_2} \leq c \text{ and } (i_n,j_1,j_2) \text{ is admissible} \} \\
        G_c &= \bigcup \{\mathbf{N}(i_n,j_1); \; j_1 \in \mathbf{N}(i_n)\} \setminus F_c.
    \end{align*}
    Then, as $F_c$ is finite and $x_i^* \overset{w^*}{\rightarrow} x^*$, we get in the same way as above that there is $i_0 \in \mathbb{N}$ such that for $i \geq i_0$
    \begin{align*}
        \sum_{\substack{j_2 \in F_c}} \frac{x_i^*(z_{j_2}) \alpha_{j_1}}{\beta_{j_1}} < x^*(z_{i_n}) - \delta/2,
    \end{align*}
    and therefore there is $i_1 \geq i_0$ such that
    \begin{align*}
       \sum_{\substack{j_{2} \in G_c \\ j_3 \in \mathbf{N}(i_n,j_1,j_2)}} \frac{x_{i_1}^*(z_{j_{3}}) \alpha_{j_{1}}}{\beta_{j_2}} = \sum_{\substack{j_2 \in G_c}} \frac{x_{i_1}^*(z_{j_2}) \alpha_{j_1}}{\beta_{j_1}} > \delta/4.
    \end{align*}
    But then again by Lemma \ref{LemmaKuzel} we have, for $j$ being the second element of $\mathbf{N}(i_n)$,
    \begin{align*}
        ||x_{i_1}^*|| \geq C^{-1} \alpha_j^{-1} c \; \delta/4,
    \end{align*}
    which contradicts boundedness of the sequence $(x_i^*)_{i=1}^\infty$. Hence,
    \begin{align*}
        x^*(z_{i_n}) = \sum_{\substack{j_1 \in \mathbf{N}(i_n) \\ j_2 \in \mathbf{N}(i_n,j_1)}} \frac{x^*(z_{j_2}) \alpha_{j_1}}{\beta_{j_1}}.
    \end{align*}
    In exactly the same way we can show that
    \begin{align*}
        1 - \sum_{\substack{j_1 \in \mathbf{N}(i_n) \\ j_2 \in \mathbf{N}(i_n,j_1)}} \frac{x^*(z_{j_2})}{\beta_{j_1}} = \delta > 0
    \end{align*}
    leads to the fact that for all $c>0$ there is $i_1 \in \mathbb{N}$ such that
    \begin{align*}
        ||x_{i_1}^*|| \geq C^{-1} c \; \delta/4,
    \end{align*}
    and contradicts boundedness of the sequence $(x_i^*)_{i=1}^\infty$. Hence,
    \begin{align*}
        1 = \sum_{\substack{j_1 \in \mathbf{N}(i_n) \\ j_2 \in \mathbf{N}(i_n,j_1)}} \frac{x^*(z_{j_2})}{\beta_{j_1}}.
    \end{align*}
\end{proof}

\begin{lemma} \label{LemmaOrderAnZdola}
    The order of $A_n$ is at least $n+1$. Specifically $z_{i_n}^* \in A_n^{(n+1)} \setminus A_n^{(n)}$.
\end{lemma}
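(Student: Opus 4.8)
The plan is to verify the two inclusions $z_{i_n}^* \in A_n^{(n+1)}$ and $z_{i_n}^* \notin A_n^{(n)}$ separately; together they give $A_n^{(n)} \neq A_n^{(n+1)}$, so the order of $A_n$ cannot be $\le n$ and is therefore at least $n+1$.

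For $z_{i_n}^* \in A_n^{(n+1)}$ I would peel off the summands of a generating vector one at a time, starting from the highest-indexed one. Fixing an admissible string $(i_n, j_1, \dots, j_{n+1})$, put, for $0 \le \ell \le n$,
\[
    w_\ell := \alpha_{j_1} z_{i_n}^* + \sum_{k=1}^{n-\ell} \beta_{j_k} z_{j_{k+1}}^*,
\]
so $w_0 \in A_n$, and prove $w_\ell \in A_n^{(\ell)}$ by induction on $\ell$. The inductive step is that $w_{\ell-1} = w_\ell + \beta_{j_{n+1-\ell}} z_{j_{n+2-\ell}}^*$, so letting the index $j_{n+2-\ell}$ run to infinity through the (infinite) set $\mathbf{N}(i_n, j_1, \dots, j_{n+1-\ell})$ with the earlier indices frozen, we have $z_{j_{n+2-\ell}}^* \overset{w^*}{\to} 0$ while the coefficient $\beta_{j_{n+1-\ell}}$ stays fixed; since the biorthogonal functionals $z_m^*$ of a Schauder basis are uniformly bounded in norm, these vectors are bounded and converge weak$^*$ to $w_\ell$, whence $w_\ell \in \bigl(A_n^{(\ell-1)}\bigr)^{(1)} = A_n^{(\ell)}$. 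For $\ell = n$ this gives $\alpha_{j_1} z_{i_n}^* \in A_n^{(n)}$ for every $j_1 \in \mathbf{N}(i_n)$; letting $j_1 \to \infty$ in $\mathbf{N}(i_n)$, where $\alpha_{j_1} \nearrow 1$ by construction, we get $\alpha_{j_1} z_{i_n}^* \to z_{i_n}^*$ in norm, so $z_{i_n}^* \in A_n^{(n+1)}$.

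The non-membership $z_{i_n}^* \notin A_n^{(n)}$ is the delicate half, and the argument of Proposition \ref{PropRovnostiAnm} cannot be pushed one step further in the obvious way: elements of $A_n^{(n-1)}$ are only known to satisfy the top two identities of Proposition \ref{PropCharakterizaceAn}, so the deeper identity used in the escaping-mass estimate there is not available, and in fact the first identity fails for some members of $A_n^{(n)}$ (e.g. for $\alpha_{j_1} z_{i_n}^*$). I would argue instead as follows. Assume $z_{i_n}^* \in A_n^{(n)}$ and, using separability of $Z$, choose a bounded sequence $(x_i^*)$ in $A_n^{(n-1)}$ with $x_i^* \overset{w^*}{\to} z_{i_n}^*$ and $\|x_i^*\| \le R$. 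Since $A_n \subseteq K$ and $K$ is weak$^*$ closed, each $x_i^* \in K$, so, writing $s_i(j_1) := \sum_{j_2 \in \mathbf{N}(i_n,j_1)} x_i^*(z_{j_2}) \ge 0$, Lemma \ref{LemmaKuzel} gives the uniform bound $\sum_{j_1 \in \mathbf{N}(i_n)} s_i(j_1) \le CR$, and Proposition \ref{PropRovnostiAnm} (with $m = n-1$) gives
\[
    \sum_{j_1 \in \mathbf{N}(i_n)} \frac{s_i(j_1)}{\beta_{j_1}} = 1 \qquad\text{and}\qquad \sum_{j_1 \in \mathbf{N}(i_n)} \frac{\alpha_{j_1}\, s_i(j_1)}{\beta_{j_1}} = x_i^*(z_{i_n}).
\]
Subtracting, $\sum_{j_1} \frac{(1-\alpha_{j_1})\,s_i(j_1)}{\beta_{j_1}} = 1 - x_i^*(z_{i_n}) \to 1 - z_{i_n}^*(z_{i_n}) = 0$; the summands are nonnegative and each coefficient $(1-\alpha_{j_1})/\beta_{j_1}$ is a fixed positive number, so $s_i(j_1) \to 0$ for every fixed $j_1$. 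Then, for any $c > 0$, the finite set $P_c = \{j_1 \in \mathbf{N}(i_n) : \beta_{j_1} \le c\}$ contributes a vanishing amount to the first identity, so for large $i$ the tail $\sum_{j_1 \notin P_c} s_i(j_1)/\beta_{j_1}$ exceeds $\tfrac12$, whence (since $\beta_{j_1} > c$ off $P_c$) $\sum_{j_1} s_i(j_1) \ge \sum_{j_1 \notin P_c} s_i(j_1) > c/2$; choosing $c > 2CR$ contradicts the uniform bound. Hence $z_{i_n}^* \notin A_n^{(n)}$.

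The main obstacle is this last step: because the identity $1 = \sum_{j_1} s_i(j_1)/\beta_{j_1}$ is not preserved on passing from $A_n^{(n-1)}$ to $A_n^{(n)}$, one cannot simply invoke Proposition \ref{PropRovnostiAnm} for the level $n$; the trick is to exploit the extra information $x_i^*(z_{i_n}) \to 1$, special to the limit point $z_{i_n}^*$, which through the second identity annihilates each individual mass $s_i(j_1)$ and so revives the escaping-mass contradiction against $\beta_k \to \infty$ and the boundedness supplied by Lemma \ref{LemmaKuzel}.
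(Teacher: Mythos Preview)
Your proof is correct and follows essentially the same strategy as the paper: iterated weak$^*$ limits to reach $z_{i_n}^*$, and an escaping-mass argument based on the two surviving identities from Proposition~\ref{PropRovnostiAnm} together with Lemma~\ref{LemmaKuzel} for non-membership. The one notable difference is in how you extract that the mass $s_i(j_1)$ over each fixed $j_1$ vanishes: you subtract the two identities to obtain $\sum_{j_1}(1-\alpha_{j_1})s_i(j_1)/\beta_{j_1}\to 0$ and read off $s_i(j_1)\to 0$ termwise, whereas the paper bounds $x_i^*(z_{i_n})$ above by $1+(\alpha_M-1)\cdot(\text{partial sum over }j_1\le M)$ and concludes the partial sum has $\liminf$ zero, passing to a subsequence. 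Your subtraction is a bit cleaner and avoids the subsequence step; otherwise the arguments coincide.
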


\begin{proof}
    First, observe that $z^*_{i_n} \in A_n^{(n+1)}$ as
    \begin{align*}
        z_{i_n}^* = w^* \lim_{j_{1}} \cdots w^* \lim_{j_{n+1}} \; \left( \alpha_{j_1} z_{i_n}^* + \sum_{k=1}^n \beta_{j_k} z^*_{j_{k+1}} \right).
    \end{align*}
    Now, suppose for a contradiction that $z_{i_n}^* \in A_n^{(n)}$. There is a sequence $(x_i^*)_{i=1}^\infty$ in $A_n^{(n-1)}$ which weak$^*$ converges to $z_{i_n}^*$. By Proposition \ref{PropRovnostiAnm} we have
    \begin{align*}
        x_i^*(z_{i_n}) &= \sum_{\substack{j_1 \in \mathbf{N}(i_n) \\ j_2 \in \mathbf{N}(i_n,j_1)}} \frac{x_i^*(z_{j_2}) \alpha_{j_1}}{\beta_{j_1}} \\
        1 &= \sum_{\substack{j_1 \in \mathbf{N}(i_n) \\ j_2 \in \mathbf{N}(i_n,j_1)}} \frac{x_i^*(z_{j_2})}{\beta_{j_1}},
    \end{align*}
    Now fix an arbitrary $M \in \mathbb{N}$, then
    \begin{align*}
        1 = z_{i_n}^*(z_{i_n}) &= \lim_i x_i^*(z_{i_n}) = \lim_i \left( \sum_{\substack{j_1 \in \mathbf{N}(i_n), \; j_1 \leq M \\ j_2 \in \mathbf{N}(i_n,j_1)}} \frac{x_i^*(z_{j_2}) \alpha_{j_1}}{\beta_{j_1}} + \sum_{\substack{j_1 \in \mathbf{N}(i_n), \; j_1 > M \\ j_2 \in \mathbf{N}(i_n,j_1)}} \frac{x_i^*(z_{j_2}) \alpha_{j_1}}{\beta_{j_1}} \right)  \\
        &\leq \liminf_i \left( \alpha_M \sum_{\substack{j_1 \in \mathbf{N}(i_n), \; j_1 \leq M \\ j_2 \in \mathbf{N}(i_n,j_1)}} \frac{x_i^*(z_{j_2})}{\beta_{j_1}} + \sum_{\substack{j_1 \in \mathbf{N}(i_n), \; j_1 > M \\ j_2 \in \mathbf{N}(i_n,j_1)}} \frac{x_i^*(z_{j_2})}{\beta_{j_1}} \right) \\
        &= \liminf_i \left( \sum_{\substack{j_1 \in \mathbf{N}(i_n) \\ j_2 \in \mathbf{N}(i_n,j_1)}} \frac{x_i^*(z_{j_2})}{\beta_{j_1}} + (\alpha_M - 1) \sum_{\substack{j_1 \in \mathbf{N}(i_n), \; j_1 \leq M \\ j_2 \in \mathbf{N}(i_n,j_1)}} \frac{x_i^*(z_{j_2})}{\beta_{j_1}} \right) \\
        &\leq 1 + (\alpha_M - 1) \; \liminf_i \left( \sum_{\substack{j_1 \in \mathbf{N}(i_n), \; j_1 \leq M \\ j_2 \in \mathbf{N}(i_n,j_1)}} \frac{x_i^*(z_{j_2})}{\beta_{j_1}} \right).
    \end{align*}
    As $\alpha_M - 1 < 0$, we get, up to passing to a subsequence if necessary, 
    \begin{align*}
        \lim_i \left( \sum_{\substack{j_1 \in \mathbf{N}(i_n), \; j_1 \leq M \\ j_2 \in \mathbf{N}(i_n,j_1)}} \frac{x_i^*(z_{j_2})}{\beta_{j_1}} \right) = 0.
    \end{align*}
    Then
    \begin{align*}
        \lim_i & \left( \sum_{\substack{j_1 \in \mathbf{N}(i_n), \; j_1 > M \\ j_2 \in \mathbf{N}(i_n,j_1)}} \frac{x_i^*(z_{j_2})}{\beta_{j_1}} \right) = \lim_i \left( \sum_{\substack{j_1 \in \mathbf{N}(i_n), \\ j_2 \in \mathbf{N}(i_n,j_1)}} \frac{x_i^*(z_{j_2})}{\beta_{j_1}} \right) = 1.
    \end{align*}
    Hence, there is $i \in \mathbb{N}$, such that
    \begin{align*}
        \sum_{\substack{j_1 \in \mathbf{N}(i_n), \; j_1 > M \\ j_2 \in \mathbf{N}(i_n,j_1)}} \frac{x_i^*(z_{j_2})}{\beta_{j_1}} > 1/2.
    \end{align*}
    But then it follows from Lemma \ref{LemmaKuzel} that
    \begin{align*}
        \norm{x_i}^* \geq C^{-1} \sum_{\substack{j_1 \in \mathbf{N}(i_n), \; j_1 > M \\ j_2 \in \mathbf{N}(i_n,j_1)}} x_i^*(z_{j_2}) > C^{-1} \beta_M/2.
    \end{align*}
    Hence, as $M$ was chosen arbitrarily, we get that $(x_i^*)_{i=1}^\infty$ is unbounded, which is a contradiction.
\end{proof}

\begin{lemma} \label{LemmaOrderAnShora}
    The order of $A_n$ is at most $n+1$. Specifically $\overline{A^{(n)}_n} = A_n^{(n+1)} = \overline{A_n}^{w^*}$.
\end{lemma}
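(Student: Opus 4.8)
We break the stated chain into its two formal inclusions and one substantial one. Since a norm-convergent sequence is bounded and weak$^*$-convergent, $\overline{A_n^{(n)}}\subseteq (A_n^{(n)})^{(1)}=A_n^{(n+1)}$; and $A_n^{(\alpha)}\subseteq\overline{A_n}^{w^*}$ holds for every ordinal $\alpha$, by transfinite induction from the definition $B^{(1)}=\bigcup_k\overline{B\cap kB_{Z^*}}^{w^*}\subseteq\overline{B}^{w^*}$ together with idempotence of the weak$^*$ closure. So the whole statement reduces to proving
\begin{equation*}
    \overline{A_n}^{w^*}\subseteq\overline{A_n^{(n)}},
\end{equation*}
that is, every point of the weak$^*$ closure of $A_n$ is a \emph{norm} limit of points of $A_n^{(n)}$. (Once this is established the chain closes; moreover $\overline{A_n}^{w^*}$ is weak$^*$ closed, so $A_n^{(n+1)}=A_n^{(n+2)}$, whence the order of $A_n$ is at most $n+1$, which with Lemma \ref{LemmaOrderAnZdola} makes it exactly $n+1$.)

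The plan is to first describe $\overline{A_n}^{w^*}$ explicitly. Since $A_n\subseteq K$, any $x^*$ in its weak$^*$ closure lies in $K$, is supported in $\mathbf{N}_n$, and by Lemma \ref{LemmaKuzel} equals $\sum_{m\in\mathbf{N}_n}x^*(z_m)z_m^*$ with $\sum_m x^*(z_m)\le C\norm{x^*}$. Taking weak$^*$ limits in the equations of Proposition \ref{PropCharakterizaceAn} along an approximating net and using nonnegativity of all summands, Fatou's lemma turns each of those equations into a one-sided inequality — $\sum\frac{x^*(z_{j_2})}{\beta_{j_1}}\le 1$, $\sum\frac{x^*(z_{j_2})\alpha_{j_1}}{\beta_{j_1}}\le x^*(z_{i_n})$, and $\sum_{j_{k+1}}\frac{x^*(z_{j_{k+1}})\beta_{j_{k-1}}}{\beta_{j_k}}\le x^*(z_{j_k})$ — while subtracting the first two equations before passing to the limit gives the complementary bound $x^*(z_{i_n})\le 1-\sum\frac{x^*(z_{j_2})(1-\alpha_{j_1})}{\beta_{j_1}}$. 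I claim that these inequalities, together with $x^*\in K$ and $\operatorname{supp}x^*\subseteq\mathbf{N}_n$, characterize $\overline{A_n}^{w^*}$: necessity is the computation just sketched, and sufficiency needs a construction. In parallel, Proposition \ref{PropRovnostiAnm} (at $m=n-1$) and one more application of Fatou describe $A_n^{(n)}$ ``from above'' as the subset of this set cut out by requiring some of the inequalities to be strict, so that the gap between $\overline{A_n}^{w^*}$ and $A_n^{(n)}$ consists only of suitably extremal points.

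For the sufficiency — and simultaneously for membership in $\overline{A_n^{(n)}}$ — let $x^*$ satisfy the conditions above. By the absolute convergence in Lemma \ref{LemmaKuzel}, the truncations of $x^*$ to finite subsets $F\subseteq\mathbf{N}_n$ (always containing $i_n$) converge to $x^*$ in norm, so it suffices to norm-approximate each finitely supported truncation by elements of $A_n^{(n)}$. Such a truncation satisfies the equations of Proposition \ref{PropCharakterizaceAn} only up to defects that are nonnegative (by the inequalities above) and, for $F$ large, arbitrarily small; I would remove these defects by adding finitely many of the generating vectors $\alpha_{j_1}z_{i_n}^*+\sum_{k=1}^n\beta_{j_k}z^*_{j_{k+1}}$ indexed by branches running deep into $\mathbf{N}_n\setminus F$, solving a finite linear system for their coefficients — whose solvability with \emph{nonnegative} coefficients is exactly what the assumed inequalities guarantee — and finally adjusting the coefficient of $z_{i_n}^*$ by a vector $cz_{i_n}^*$ with $c<1$; the latter, by the iterated weak$^*$-limit computation in the proof of Lemma \ref{LemmaOrderAnZdola} together with convexity, already lies in $A_n^{(n)}$, so the approximants stay in $A_n^{(n)}$. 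One then checks that the repair terms can be arranged to have norm tending to $0$, so the approximation is in norm.

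The main obstacle is this construction, which has two intertwined difficulties: (i) keeping the repaired vectors inside $A_n^{(n)}$ rather than merely inside $\overline{A_n}^{w^*}$, which requires careful bookkeeping — organized along the tree $\mathbf{N}(i_n,j_1,\dots,j_k)$ and using Proposition \ref{PropRovnostiAnm} — of which equations of Proposition \ref{PropCharakterizaceAn} are still ``active'' at level $n$ and which have been relaxed; and (ii) controlling norms, since a repair vector with $\alpha_{j_1}$ forced close to $1$ necessarily has $\beta_{j_1}$ large, so the product of its coefficient with $\beta_{j_1}$ must be shown to stay bounded, in fact to tend to $0$ — which works only because the defect being repaired itself tends to $0$ as $F$ grows. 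Granting the construction, $\overline{A_n}^{w^*}\subseteq\overline{A_n^{(n)}}$ follows, and with the two formal inclusions the statement is proved.
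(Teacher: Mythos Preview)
Your reduction to $\overline{A_n}^{w^*}\subseteq\overline{A_n^{(n)}}$, your identification of the one-sided inequalities governing $\overline{A_n}^{w^*}$, and your outer norm-approximation by finitely supported truncations are all correct and match the paper's argument exactly; the paper calls the two key quantities $\delta(x^*)$ and $\gamma(x^*)$ and in its last paragraph passes from a partial sum $v^*$ (which satisfies $\delta\ge\gamma\ge 0$) to $(1-k^{-1})v^*$ (which satisfies $\delta>\gamma\ge 0$) and then norm-converges back.

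The genuine gap is the construction you yourself flag with ``granting the construction''. Your plan is to take a finitely supported $x^*$ with $\delta(x^*)>\gamma(x^*)\ge 0$ and norm-approximate it by adding repair vectors of small norm. The paper does \emph{not} do this, and your difficulty (ii) is a real obstruction to your route: the generating vectors carry coefficients $\beta_{j_k}$ that are unbounded, and there is no mechanism forcing (coefficient)$\times\beta_{j_k}\to 0$ when you repair several levels simultaneously. What the paper does instead is show that such an $x^*$ lies in $A_n^{(n)}$ \emph{exactly}, not merely approximately, via an explicit iterated weak$^*$ limit. First it uses the strict inequality $\delta>\gamma$ to place the point $(\gamma(x^*),\delta(x^*))$ in the planar cone $D$ generated by the rays $(\alpha_{j_1}/\beta_{j_1},1/\beta_{j_1})$, obtaining coefficients $a_{j_1}\ge 0$; this is precisely your ``finite linear system'', solved at the top level. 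Then it defines, level by level, elements $y^*(l_1,\dots,l_k)$ by adding at level $k$ a correction supported on the $l_k$-th children $j_k(l_k)$, chosen so that the level-$k$ equation of Proposition~\ref{PropCharakterizaceAn} becomes exact. One checks from Proposition~\ref{PropCharakterizaceAn} that $y^*(l_1,\dots,l_n)\in A_n$, and that $y^*(l_1,\dots,l_k)\to y^*(l_1,\dots,l_{k-1})$ weak$^*$ as $l_k\to\infty$: the difference is a fixed finite linear combination of the weak$^*$-null (but not norm-null) vectors $z^*_{j_k(l_k)}$. Iterating $n$ times gives $x^*\in A_n^{(n)}$, and no norm control on the repairs is needed at this stage.
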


\begin{proof}
    As $\overline{A_n^{(n)}} \subseteq A_n^{(n+1)} \subseteq \overline{A_n}^{w^*}$, we just need to show that each element of $\overline{A_n}^{w^*}$ is a norm limit of elements of $A_n^{(n)}$. For $y^* \in K$ (recall that $K$ is the positive cone of $Z^*$) we define
    \begin{align*}
        \delta(y^*) &= 1 - \sum_{\substack{j_1 \in \mathbf{N}(i_n) \\ j_2 \in \mathbf{N}(i_n,j_1)}} \frac{y^*(z_{j_2})}{\beta_{j_1}} \\
        \gamma(y^*) &=  y^*(z_{i_n}) -  \sum_{\substack{j_1 \in \mathbf{N}(i_n) \\ j_2 \in \mathbf{N}(i_n,j_1)}} \frac{y^*(z_{j_2})\alpha_{j_1}}{\beta_{j_1}}.
    \end{align*}
    Take any $x^* \in K$ with finite support in $\mathbf{N}_n$ and which satisfies $\delta(x^*) > \gamma(x^*) \geq 0$. Let us consider
    \begin{align*}
        D = \operatorname{conv} \left\{ a \left( \frac{\alpha_{j_1}}{\beta_{j_1}}, \frac{1}{\beta_{j_1}} \right); \; a \geq 0, j_1 \in \mathbf{N}(i_n) \right\}.
    \end{align*}
    Then $D$ is the cone formed by rays with gradients in $[\min \alpha_{j_1},\sup \alpha_{j_1}) = [0,1)$. Hence, $(\gamma(x^*),\delta(x^*))$ is in $D$ and we can write it as a convex combination
    \begin{align*}
        (\gamma(x^*),\delta(x^*)) = \sum_{j_1 \in \mathbf{N}(i_n)} a_{j_1} \left( \frac{\alpha_{j_1}}{\beta_{j_1}}, \frac{1}{\beta_{j_1}} \right).
    \end{align*}
    We now introduce some new notation. For $j_k \in \mathbf{N}(i_n, j_1, \dots, j_{k-1})$ we define $j_k(l)$ to be the $l^{\text{th}}$ element of $\mathbf{N}(i_n,j_1,\dots,j_k)$. We write $j_k(l_1,l_2)$ instead of $(j_k(l_1))(l_2)$ for shortness. Now let us inductively define for $l,l_1,\dots,l_n \in \mathbb{N}$
    \begin{align*}
        y^*(l) &= x^* + \sum_{\substack{j_1 \in \mathbf{N}(i_{n})}} a_{j_1} z^*_{j_1(l)}\\
        y^*(l_1,l_2) &= y^*(l_1) + \sum_{\substack{j_1 \in \mathbf{N}(i_n) \\ j_2 \in \mathbf{N}(i_n,j_1)}} \left( y^*(l_1)(z_{j_2}) - \sum_{j_3 \in \mathbf{N}(i_n,j_1,j_2)} \frac{y^*(l_1)(z_{j_3}) \beta_{j_1}}{\beta_{j_2}}\right) \frac{z^*_{j_2(l_2)}\beta_{j_2}}{\beta_{j_1}}\\
        &\vdots
    \end{align*}
    \begin{align*}
        y^*(l_1,\dots,l_n) = y^*(l_1,\dots,&l_{n-1}) + \sum_{\substack{j_1 \in \mathbf{N}(i_n), \dots \\ j_n \in \mathbf{N}(i_n,j_1,\dots,j_{n-1})}} \left( y^*(l_1,\dots,l_{n-1})(z_{j_n}) - \vphantom{\sum_{j \in \mathbb{N}} \frac{\beta}{\beta}} \right. \\
        &\left. \sum_{j_{n+1} \in \mathbf{N}(i_n,j_1,\dots,j_{n})} \frac{y^*(l_1,\dots,l_{n-1})(z_{j_{n+1}}) \beta_{j_{n-1}}}{\beta_{j_n}}\right) \frac{z^*_{j_n(l_n)}\beta_{j_n}}{\beta_{j_{n-1}}}.
    \end{align*}
    It is easily proved by induction over $k = 1,\dots, n$ that $y^*(l_1,\dots,l_k)$ have finite support in $\mathbf{N}_n$.
    Further,
    \begin{align*}
        y^*(l_1,\dots,l_k) &\underset{l_k}{\overset{w^*}{\longrightarrow}} y^*(l_1,\dots,l_{k-1}) & 2 \leq k \leq n\\
        y^*(l) &\underset{l}{\overset{w^*}{\longrightarrow}} x^*.&
    \end{align*}
    To see this, consider
    \begin{align*}
        y^*(l_1,\dots,l_k) - y^*(l_1,\dots,l_{k-1}) = \sum_{\substack{j_1 \in \mathbf{N}(i_n), \dots \\ j_k \in \mathbf{N}(i_n,j_1,\dots,j_{k-1})}} c_{j_k} z^*_{j_k(l_k)},
    \end{align*}
    where
    \begin{align*}
        c_{j_k} = \left( y^*(l_1,\dots,l_{k-1})(z_{j_k}) - \sum_{j_{k+1} \in \mathbf{N}(i_n,j_1,\dots,j_k)} \frac{y^*(l_1,\dots,l_{k-1})(z_{j_{k+1}}) \beta_{j_{k-1}}}{\beta_{j_{k}}} \right) \frac{\beta_{j_k}}{\beta_{j_{k-1}}}.
    \end{align*}
    Then only finitely many $c_{j_k}$ are nonzero, as $y^*(l_1,\dots,l_{k-1})$ has finite support, and $c_{j_k}$ are independent of $l_k$. Hence, $y^*(l_1,\dots,l_k) - y^*(l_1,\dots,l_{k-1})$ is a finite linear combination of $z^*_{j_k}(l_k)$. Now, we just need to notice that the sequences $(z^*_{j_k(l_k)})_{l_k=1}^\infty$ are subsequences of $(z^*_r)_{r=1}^\infty$, which is weak$^*$ null as the basis $(z_r)_{r=1}^\infty$ is seminormalized. Similarly we get that $y^*(l)$ weak$^*$ converges to $x^*$ as $y^*(l) - x^*$ is a finite linear combination of weak$^*$ null sequences.
    
    Hence, if we prove that the elements $y^*(l_1,\dots,l_n) \in A_n$, we get that $x^* \in A_n^{(n)}$. We will prove this using Proposition \ref{PropCharakterizaceAn}. For the sake of brevity let us denote $y^* = y^*(l_1,\dots,l_n)$ and show that $y^* \in A_n$. As we have already shown that $y^*$ has finite support in $\mathbf{N}_n$, we just need to prove that the equations of Proposition \ref{PropCharakterizaceAn} hold for $y^*$. Take admissible $(i_n,j_1,\dots,j_k)$ for $2 \leq k \leq n$. Then
    \begin{align*}
        y^*(z_{j_k}) = y^*(l_1,\dots,l_{n-1})(z_{j_k}) = \cdots = y^*(l_1,\dots,l_{k-1})(z_{j_k})
    \end{align*}
    as for $k \leq m \leq n$ we have that $y^*(l_1,\dots,l_{m}) - y^*(l_1,\dots,l_{m-1})$ has support in the sets of type $\mathbf{N}(i_n,\tilde{j_1},\dots,\tilde{j_m})$ (that is indexed by sequences of length $m+1$) and $j_k \in \mathbf{N}(i_n,j_1,\dots,j_{k-1})$, which is not a set of this type as $k \leq m$. Likewise for $j_{k+1} \in \mathbf{N}(i_n,j_1,\dots,j_{k})$ we have
    \begin{align*}
        y^*(z_{j_{k+1}}) = y^*(l_1,\dots,l_k)(z_{j_{k+1}}).
    \end{align*}
    Then
    \begin{align*}
        \sum_{j_{k+1} \in \mathbf{N}(i_n,j_1,\dots,j_k)} \frac{y^*(z_{j_{k+1}})\beta_{j_{k-1}}}{\beta_{j_k}} = \sum_{j_{k+1} \in \mathbf{N}(i_n,j_1,\dots,j_k)} \frac{y^*(l_1,\dots,l_k)(z_{j_{k+1}})\beta_{j_{k-1}}}{\beta_{j_k}} \\
        = \sum_{j_{k+1} \in \mathbf{N}(i_n,j_1,\dots,j_k)} \frac{y^*(l_1,\dots,l_{k-1})(z_{j_{k+1}})\beta_{j_{k-1}}}{\beta_{j_k}} + \\ +  \left( y^*(l_1,\dots,l_{k-1})(z_{j_k}) - \sum_{j_{k+1} \in \mathbf{N}(i_n,j_1,\dots,j_k)} \frac{y^*(l_1,\dots,l_{k-1})(z_{j_{k+1}})\beta_{j_{k-1}}}{\beta_{j_k}} \right)\\
        = y^*(l_1,\dots,l_{k-1})(z_{j_k}) = y^*(z_{j_k}).
    \end{align*}
    The second equality holds by the definition of $y^*(l_1,\dots,l_k)$ and the fact that $j_k(l_k) = j_{k+1}$ for exactly one $j_{k+1} \in \mathbf{N}(i_n,j_1,\dots,j_{k})$.
    
    Recall that the coefficients $(a_{j_1})_{j_1 \in \mathbf{N}(i_n)}$ were chosen in such a way that
    \begin{align*}
        \delta(x^*) = \sum_{j_1 \in \mathbf{N}(i_n)} \frac{a_{j_1}}{\beta_{j_1}} \;\; \text{ and } \;\; \gamma(x^*) = \sum_{j_1 \in \mathbf{N}(i_n)} \frac{a_{j_1} \alpha_{j_1}}{\beta_{j_1}}.
    \end{align*}
    Hence, as $y^*(z_{j_2}) = y^*(l_1)(z_{j_2})$, we get by the definition of $y^*(l_1)$ and the fact that $z^*_{j_1(l)}(z_{j_2}) = 1$ for exactly one $z_{j_2} \in \mathbf{N}(i_n,j_1)$ and is zero otherwise that
    \begin{align*}
        \sum_{\substack{j_1 \in \mathbf{N}(i_n) \\ j_2 \in \mathbf{N}(i_n,j_1)}} \frac{y^*(z_{j_2})}{\beta_{j_1}} &=  \sum_{\substack{j_1 \in \mathbf{N}(i_n) \\ j_2 \in \mathbf{N}(i_n,j_1)}} \frac{x^*(z_{j_2})}{\beta_{j_1}} + \sum_{\substack{j_1 \in \mathbf{N}(i_n)}} \frac{a_{j_1}}{\beta_{j_1}} \\
        &= \sum_{\substack{j_1 \in \mathbf{N}(i_n) \\ j_2 \in \mathbf{N}(i_n,j_1)}} \frac{x^*(z_{j_2})}{\beta_{j_1}} + \delta(x^*) = 1.\\
        \sum_{\substack{j_1 \in \mathbf{N}(i_n) \\ j_2 \in \mathbf{N}(i_n,j_1)}} \frac{y^*(z_{j_2}) \alpha_{j_1}}{\beta_{j_1}} &=  \sum_{\substack{j_1 \in \mathbf{N}(i_n) \\ j_2 \in \mathbf{N}(i_n,j_1)}} \frac{x^*(z_{j_2})\alpha_{j_1}}{\beta_{j_1}} + \sum_{\substack{j_1 \in \mathbf{N}(i_n)}} \frac{a_{j_1}\alpha_{j_1}}{\beta_{j_1}} \\
        &= \sum_{\substack{j_1 \in \mathbf{N}(i_n) \\ j_2 \in \mathbf{N}(i_n,j_1)}} \frac{x^*(z_{j_2})}{\beta_{j_1}} + \gamma(x^*) = x^*(z_{i_n}) = y^*(z_{i_n}).
    \end{align*}
    The last equality holds as $z_{i_n}$ is not in the support of $y^* - x^*$. Therefore $y^* \in A_n$ and $x^* \in A_n^{(n)}$.
    
    Now take $z^* \in \overline{A_n}^{w^*}$. As $\overline{A_n}^{w^*} \subseteq K$, $z^*$ is norm limit of its partial sums by the virtue of Lemma \ref{LemmaKuzel}. Therefore we just need to show that the partial sums of $z^*$ are elements of $\overline{A_n^{(n)}}$. For any such partial sum $v^*$ we have that $\delta(v^*) \geq \gamma(v^*) \geq 0$, as this holds on $\overline{A_n}^{w^*}$ and taking partial sums increases $\delta$ more than it increases $\gamma$. Then $v_k^* = (1-k^{-1}) v^*$ has finite support in $\mathbf{N}_n$ and $\delta(v_k^*) > \gamma(v_k^*) \geq 0$. Hence, by the previous part of the proposition, $v_k^* \in A_n^{(n)}$, $v^* = \lim v_k^* \in \overline{A_n^{(n)}}$ and $z^* \in \overline{A_n^{(n)}}$.
\end{proof}

Now we can formulate and prove the fist main result of this paper.

\begin{theorem} \label{ThmRadn}
    Let $X$ be a non-reflexive Banach space and $n \in \mathbb{N}$. Then there is a convex subset $B \subseteq X^*$ of order $n$.
\end{theorem}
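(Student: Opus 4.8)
The plan is to reduce Theorem~\ref{ThmRadn} to the sets $A_n \subseteq Z^*$ already constructed, together with the transfer Lemma~\ref{LemmaPrevod}, so that essentially no new computation is required. Here $Z$ denotes the closed linear span in $X$ of the seminormalized basic sequence with bounded partial sums furnished by Lemma~\ref{LemmaBaze} --- this is the only point where non-reflexivity of $X$ is used. Let $E \colon Z \to X$ be the inclusion map, so that $E^* \colon X^* \to Z^*$ is the restriction operator. By the Hahn--Banach theorem $E^*$ is surjective, and consequently, for any subsets $S, T$ of $Z^*$ one has $(E^*)^{-1}(S) = (E^*)^{-1}(T)$ if and only if $S = T$; moreover $(E^*)^{-1}$ sends convex sets to convex sets, since $E^*$ is linear.

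For $n \geq 1$ I would take $B = (E^*)^{-1}\bigl(A_n^{(1)}\bigr)$. By Lemmas~\ref{LemmaOrderAnZdola} and~\ref{LemmaOrderAnShora} the order of $A_n$ in $Z^*$ is exactly $n+1$, that is $A_n^{(k)} \subsetneq A_n^{(k+1)}$ for $k \leq n$ while $A_n^{(n+1)} = A_n^{(n+2)}$; since the weak$^*$ derived set of a convex set is again convex, $A_n^{(1)}$ is a convex set whose order is exactly $n$ (its $j$-th derived set being $A_n^{(j+1)}$). By Lemma~\ref{LemmaPrevod} we have $B^{(k)} = (E^*)^{-1}\bigl(A_n^{(k+1)}\bigr)$ for every $k$, and $B$ is convex. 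Using the equivalence from the first paragraph, $B^{(k)} = B^{(k+1)}$ holds precisely when $A_n^{(k+1)} = A_n^{(k+2)}$, i.e. precisely when $k \geq n$; hence the order of $B$ in $X^*$ is exactly $n$. (If one prefers to avoid iterated derived sets in the definition of $B$, then for $n \geq 2$ one may instead take $B = (E^*)^{-1}(A_{n-1})$, which has order $n$ by the same reasoning, and treat $n = 1$ separately by noting that the open unit ball of $X^*$ is a convex set of order $1$, its weak$^*$ derived set being the closed unit ball. The case $n = 0$, if included in $\mathbb{N}$, is witnessed by any weak$^*$ closed convex set such as $B = \{0\}$.)

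Since Lemma~\ref{LemmaPrevod} is quoted and the exact order of $A_n$ is the content of Lemmas~\ref{LemmaOrderAnZdola} and~\ref{LemmaOrderAnShora}, this argument is short and there is no serious obstacle. The only points needing a moment's care are that $(E^*)^{-1}$ does not collapse distinct sets --- which is exactly why surjectivity of $E^*$, not injectivity, is the relevant property --- and that the pulled-back sets remain convex, so that the orders of $B$ and of $A_n^{(1)}$ genuinely coincide.
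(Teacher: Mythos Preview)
Your proof is correct and follows essentially the same approach as the paper: pull back the sets $A_n$ (or their derived sets) from $Z^*$ to $X^*$ via $(E^*)^{-1}$ and invoke Lemma~\ref{LemmaPrevod}. The paper takes $B=(E^*)^{-1}(A_{n-1})$ directly, which is exactly your parenthetical alternative; your primary choice $B=(E^*)^{-1}(A_n^{(1)})$ is an equivalent variant, and your explicit justification of surjectivity of $E^*$ (so that $(E^*)^{-1}$ does not collapse distinct sets) and of the boundary case $n=1$ is in fact more careful than the paper's own write-up.
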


\begin{proof}
    Lemma \ref{LemmaBaze} gives us a subspace $Z$ of the space $X$ with semi-normalized basis $(z_n)_{n=1}^\infty$ with bounded partial sums. By Lemmata \ref{LemmaOrderAnZdola} and \ref{LemmaOrderAnShora} there is a convex subset $A_{n-1}$ of $Z^*$ for which $A_n^{(n-1)} \subsetneq A_n^{(n)} = \overline{A_n}^{w^*}$. Let us consider the identity embedding $E:Z \rightarrow X$ and define $B = (E^*)^{-1} (A_{n-1})$. Then Lemma \ref{LemmaPrevod} gives us
    \begin{align*}
        B^{(n-1)} \subsetneq B^{(n)} = \overline{B}^{w^*}.
    \end{align*}
\end{proof}

Now we prove that the convex set $A \subseteq Z^*$ has order $\omega + 1$. First we show that the positive cone $K$ behaves nicely with respect to restrictions on subsets of $\mathbb{N}$.

For this sake we define, for $x^* = \sum_{k=1}^\infty x^*(z_k) z_k^* \in Z^*$ and $n \in \mathbb{N}$, the restriction of $x^*$ on $\mathbf{N}_n$ as the formal sum $\sum_{k \in \mathbf{N}_n} x^*(z_k) z_k^*$. Note that in general this sum is not necessarily convergent in $Z^*$. If it is, we denote by $x^* \restriction \mathbf{N}_n$ its limit.

\begin{lemma} \label{LemmaKuzelRestrikce}
    Let $y^*$ be an element of $K$ and $n \in \mathbb{N}$. Then $y^* \restriction \mathbf{N}_n$ is a well defined element of $K$.
    
    Further, if we have a sequence $(x_k^*)_{k=1}^\infty \subseteq K$ which weak$^*$ converges to $x^* \in K$, then for all $n \in \mathbb{N}$ we have that $x_k^* \restriction \mathbf{N}_n \overset{w^*}{\underset{k}{\longrightarrow}} x^* \restriction \mathbf{N}_n$.
\end{lemma}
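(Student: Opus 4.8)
The plan is to reduce both assertions to Lemma \ref{LemmaKuzel}, together with the (standard) fact that the biorthogonal functionals of a basic sequence are uniformly bounded in norm. Since $(z_n)_{n=1}^\infty$ is a basis of $Z$, the partial-sum projections are uniformly bounded by some basis constant $K_b$, whence $\norm{z_n^*} \le 2K_b/\norm{z_n} \le 2K_b/C_1$ for all $n$; write $M = \sup_n \norm{z_n^*} < \infty$. (This uniform bound is also implicitly what makes $z_n^* \overset{w^*}{\to} 0$, as already noted.)

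For the first assertion, take $y^* \in K$. By Lemma \ref{LemmaKuzel} we have $\sum_{k=1}^\infty y^*(z_k) \le C\norm{y^*} < \infty$, and in particular the formal sum $\sum_{k \in \mathbf{N}_n} y^*(z_k)z_k^*$ is dominated in $Z^*$-norm by $M \sum_{k \in \mathbf{N}_n} y^*(z_k) \le M\sum_{k=1}^\infty y^*(z_k) < \infty$. Hence it converges absolutely in $Z^*$; denote its sum $y^* \restriction \mathbf{N}_n$. Evaluating against $z_j$ we obtain $y^*(z_j)$ if $j \in \mathbf{N}_n$ and $0$ otherwise, so in either case the value is nonnegative and $y^* \restriction \mathbf{N}_n \in K$.

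For the second assertion, first note that by the Banach--Steinhaus theorem the sequence $(x_k^*)_{k=1}^\infty$ is bounded, say $\norm{x_k^*} \le L$ for all $k$, and also $\norm{x^*} \le \liminf_k \norm{x_k^*} \le L$. Applying the norm estimate from the previous paragraph to $x_k^*$ and to $x^*$ gives $\norm{x_k^* \restriction \mathbf{N}_n} \le MCL$ and $\norm{x^* \restriction \mathbf{N}_n} \le MCL$, so the sequence $(x_k^* \restriction \mathbf{N}_n)_{k=1}^\infty$ is norm bounded. Since $Z$ is separable and $\{z_j : j \in \mathbb{N}\}$ is a fundamental system in $Z$, a bounded sequence in $Z^*$ converges weak$^*$ as soon as the evaluations at each $z_j$ converge (a routine $3\varepsilon$-argument, using the uniform bound, transfers convergence from $\operatorname{span}\{z_j\}$ to all of $Z$). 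But $(x_k^* \restriction \mathbf{N}_n)(z_j)$ equals $x_k^*(z_j)$ when $j \in \mathbf{N}_n$ and $0$ otherwise; since $x_k^* \overset{w^*}{\to} x^*$, this converges to $x^*(z_j)$, respectively $0$, i.e. to $(x^* \restriction \mathbf{N}_n)(z_j)$. Therefore $x_k^* \restriction \mathbf{N}_n \overset{w^*}{\to} x^* \restriction \mathbf{N}_n$.

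The only point requiring any care is the uniform bound $\sup_n \norm{z_n^*} < \infty$, which is where basicity of $(z_n)$ enters; the remainder is the domination estimate and the standard separability/density reduction, so I do not expect a genuine obstacle here.
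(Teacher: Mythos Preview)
Your proof is correct and follows essentially the same approach as the paper: both use Lemma~\ref{LemmaKuzel} together with the uniform bound on $\norm{z_n^*}$ to establish absolute convergence of the restricted series and boundedness of the restricted sequence, and then reduce weak$^*$ convergence to pointwise convergence on the fundamental system $\{z_j\}$. The only cosmetic difference is that the paper phrases the last step as ``a weaker Hausdorff topology coincides with the weak$^*$ topology on bounded sets,'' whereas you spell out the equivalent $3\varepsilon$/density argument.
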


\begin{proof}
    For the first part we notice that $\sum_{k \in \mathbf{N}_n} y^*(z_k) z_k^*$ is a subseries of the series $\sum_{k=1}^\infty y^*(z_k) z_k^*$, which is absolutely convergent by Lemma \ref{LemmaKuzel}. Hence, it is also absolutely convergent. The fact that its limit is an element of $K$ is clear by the definitions.
    
    For the second part we first prove that the sequence $(x_k^* \restriction \mathbf{N}_n)_{k=1}^\infty$, which is well defined by the first part of the lemma, is bounded. Recall that, as the basis $(z_n)_{n=1}^\infty$ is seminormalized, the biorthogonal basic sequence $(z_n^*)_{n=1}^\infty$ is bounded by some constant $C_3 > 0$. Then for $k \in \mathbb{N}$
    \begin{align*}
        ||x_k^* \restriction \mathbf{N}_n|| \leq \sum_{l \in \mathbf{N}_n} ||x_k^*(z_l) z_l^*|| \leq C_3 \sum_{l \in \mathbf{N}_n} x_k^*(z_l) \leq C_3 \sum_{l = 1}^\infty x_k^*(z_l) \leq C \: C_3 ||x_k^*||.
    \end{align*}
    We used that $x_k^* \in K$ and Lemma \ref{LemmaKuzel}. Boundedness of $(x_k^* \restriction \mathbf{N}_n)_{k=1}^\infty$ now follows from the boundedness of the weak$^*$ convergent sequence $(x_k^*)_{k=1}^\infty$. Notice that the sequence $(x_k^* \restriction \mathbf{N}_n)_{k=1}^\infty$ converges to $x^* \restriction \mathbf{N}_n$ in the topology of pointwise convergence (that is the topology on $Z^*$ generated by $\{z_k; \; k \in \mathbb{N}\} \subseteq Z$). Hence, as the topology of pointwise convergence is a weaker Hausdorff topology then the weak$^*$ topology, they coincide on bounded subsets of $Z^*$. Therefore, as the sequence of restrictions $(x_k^* \restriction \mathbf{N}_n)_{k=1}^\infty$ is bounded, it converges to $x^* \restriction \mathbf{N}_n$ also in the weak$^*$ topology.
\end{proof}

Now let us recall that the set $A \subseteq K$ was defined as
\begin{align*}
    A = \operatorname{conv} \bigcup_{n=1}^\infty A_n.
\end{align*}
and that the sets $A_n$ have support in the sets $\mathbf{N}_n$, which form a partition of $\mathbb{N}$.

\begin{lemma} \label{LemmaARestrikce}
    Let $x^*$ be an element of $A^{(k)}$ for some $k \in \omega$. Then for all $n \in \mathbb{N}$ there is $t_n \in [0,1]$ and $x_n^* \in A_n^{(k)}$ such that $x^* \restriction \mathbf{N}_n = t_n x_n^*$ and $\sum_{n=1}^\infty t_n \leq 1$.
\end{lemma}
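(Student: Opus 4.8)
The plan is to induct on $k$. For $k=0$, an element $x^* \in A = \operatorname{conv}\bigcup_n A_n$ is a finite convex combination $\sum_i \lambda_i a_i$ with $a_i \in A_{n(i)}$; grouping the terms according to the value of $n(i)$ and using that each $A_n$ is convex, is supported in $\mathbf{N}_n$, and that the $\mathbf{N}_n$ partition $\mathbb{N}$, one reads off $t_n \in [0,1]$ and $x_n^* \in A_n$ with $x^*\restriction\mathbf{N}_n = t_n x_n^*$ and $\sum_n t_n = \sum_i \lambda_i = 1$ (if $t_n = 0$, pick any $x_n^* \in A_n$, which is nonempty).

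For the inductive step, assume the claim for $k-1$ and take $x^* \in A^{(k)}$. Since $Z$ is separable, weak$^*$ derived sets in $Z^*$ are weak$^*$ sequential closures, and a weak$^*$-convergent sequence is bounded, so there is a bounded sequence $(x_i^*)_i$ in $A^{(k-1)}$ with $x_i^* \overset{w^*}{\to} x^*$; put $L = \sup_i \|x_i^*\|$. Applying the induction hypothesis to each $x_i^*$ gives $t_n^{(i)} \in [0,1]$ and $x_n^{*,(i)} \in A_n^{(k-1)}$ with $x_i^*\restriction\mathbf{N}_n = t_n^{(i)} x_n^{*,(i)}$ and $\sum_n t_n^{(i)} \le 1$. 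By a diagonal argument, pass to a subsequence along which $t_n := \lim_i t_n^{(i)}$ exists for every $n$; then $\sum_n t_n \le 1$. By Lemma \ref{LemmaKuzelRestrikce} we have $x_i^*\restriction\mathbf{N}_n \overset{w^*}{\to} x^*\restriction\mathbf{N}_n$ and $\|x_i^*\restriction\mathbf{N}_n\| \le C\,C_3\|x_i^*\| \le C\,C_3 L$. It remains, for each $n$, to produce $x_n^* \in A_n^{(k)}$ with $x^*\restriction\mathbf{N}_n = t_n x_n^*$.

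If $t_n > 0$, then $t_n^{(i)} > 0$ for large $i$, so eventually $x_n^{*,(i)} = (t_n^{(i)})^{-1}(x_i^*\restriction\mathbf{N}_n)$; this is a bounded sequence in $A_n^{(k-1)}$ converging weak$^*$ to $t_n^{-1}(x^*\restriction\mathbf{N}_n)$, which therefore lies in $A_n^{(k)}$ and is the required $x_n^*$. The main case is $t_n = 0$, where one must show $x^*\restriction\mathbf{N}_n = 0$ (and then take $x_n^*$ to be any element of $A_n^{(k)}$). The idea is to exploit the conservation laws of Proposition \ref{PropCharakterizaceAn}: the weak$^*$-continuous functional $x^* \mapsto x^*(z_{i_n})$ is $\le 1$ on $A_n$ (since $\alpha_{j_1} < 1$), hence on all of $\overline{A_n}^{w^*}$; and for each $1 \le l \le n$ the functional $\sigma_n^{(l)}(x^*) = \sum_{(i_n,j_1,\dots,j_{l+1})\ \mathrm{admissible}} x^*(z_{j_{l+1}})/\beta_{j_l}$ equals $1$ on $A_n$ — for $l=1$ this is the first equation of Proposition \ref{PropCharakterizaceAn}, and for $l \ge 2$ it follows by dividing the $l$-th layer equation by $\beta_{j_{l-1}}$, summing over admissible prefixes, and telescoping to the case $l=1$. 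Each $\sigma_n^{(l)}$ is a finite-valued, non-negative, pointwise lower semicontinuous functional on $K$ (an increasing supremum of finite sums of the $z_j^*$'s), so a Fatou-type induction over the finite derived sets yields $\sigma_n^{(l)} \le 1$ on every $A_n^{(m)}$, in particular on $A_n^{(k-1)}$.

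Granting this, fix a coordinate index in $\mathbf{N}_n$: it is either $i_n$ or some $j_{l+1} \in \mathbf{N}(i_n,j_1,\dots,j_l)$ with $1 \le l \le n$, and these exhaust $\mathbf{N}_n$. By the bounds above (and non-negativity of the summands of $\sigma_n^{(l)}$), the value of $x_n^{*,(i)}$ on the corresponding $z$ is bounded uniformly in $i$ — by $1$ in the first case, by the fixed number $\beta_{j_l}$ in the second. Multiplying by $t_n^{(i)} \to 0$, and recalling that $t_n^{(i)} x_n^{*,(i)} = x_i^*\restriction\mathbf{N}_n$ converges weak$^*$, hence coordinatewise, to $x^*\restriction\mathbf{N}_n$, we conclude that every coordinate of $x^*\restriction\mathbf{N}_n$ vanishes; since $x^*\restriction\mathbf{N}_n \in K$ by Lemma \ref{LemmaKuzelRestrikce}, it is the zero functional. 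This finishes the case $t_n = 0$ and the induction. I expect the only genuinely delicate point to be the claim that $\sigma_n^{(l)} \le 1$ on all finite derived sets $A_n^{(m)}$, where the recursive equations of Proposition \ref{PropCharakterizaceAn} and the semicontinuity argument must be combined; the rest is bookkeeping.
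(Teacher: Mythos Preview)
Your proof is correct and follows the same inductive skeleton as the paper: the base case from the definition of $A$, the inductive step via Lemma~\ref{LemmaKuzelRestrikce} to pass restrictions through weak$^*$ limits, a diagonal extraction for the $t_n$'s, and Fatou's lemma for $\sum_n t_n \le 1$.

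The only place you diverge from the paper is the case $t_n = 0$. The paper simply writes ``If $t_n = 0$, we set $x_n^*$ to be any element of $A_n^{(k+1)}$'' and then asserts $x^*\restriction\mathbf{N}_n = t_n x_n^*$, tacitly assuming $x^*\restriction\mathbf{N}_n = 0$ without justification. You instead supply a real argument: the functionals $\sigma_n^{(l)}$ equal $1$ on $A_n$ (via the telescoping of the equations in Proposition~\ref{PropCharakterizaceAn}), are weak$^*$ lower semicontinuous on $K$, hence remain $\le 1$ on every $A_n^{(m)}$; this bounds each coordinate of $x_n^{*,(i)}$ by a constant depending only on the coordinate (namely $1$ at $i_n$ and $\beta_{j_l}$ at $j_{l+1}$), so $t_n^{(i)} x_n^{*,(i)} \to 0$ coordinatewise and $x^*\restriction\mathbf{N}_n = 0$. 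This is a correct and clean way to close what is arguably a small gap in the paper's presentation; the rest of your argument matches the paper's essentially line for line.
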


\begin{proof}
    We will proceed by induction. For $k = 0$ the result follows by the definition of $A$, as any $x^* \in A$ is a convex combination $\sum_{n=1}^\infty t_n x_n^*$, where $x_n^* \in A_n$. Then $x^* \restriction \mathbf{N}_n = t_n x_n^*$ as the sets $\mathbf{N}_n$ are pairwise disjoint.
    
    Now let us suppose that the lemma holds for $k \in \omega$ and take any $x^* \in A^{(k+1)}$. Then we can find a sequence $(x^*_l)_{l=1}^\infty \subseteq A^{(k)}$ which weak$^*$ converges to $x^*$. By the induction hypothesis we have
    \begin{align*}
        x_l^* \restriction \mathbf{N}_n = t_{l,n} \: x^*_{l,n}, \hspace{2cm} x_{l,n}^* \in A_n^{(k)}, \; t_{l,n} \in [0,1], \; \sum_{n=1}^\infty t_{l,n} \leq 1.
    \end{align*}
    By Lemma \ref{LemmaKuzelRestrikce} we have for each $n \in \mathbb{N}$
    \begin{align*}
        t_{l,n} \: x_{l,n}^* = x_l^* \restriction \mathbf{N}_n \overset{w^*}{\underset{l}{\longrightarrow}} x^* \restriction \mathbf{N}_n.
    \end{align*}
    Now we can, up to passing to a subsequence, assume that $t_{l,n} \underset{l}{\longrightarrow} t_n \in [0,1]$. If $t_n = 0$, we set $x_n^*$ to be any element of $A_n^{(k+1)}$. Otherwise we set $x_n^* = \frac{x^* \restriction \mathbf{N}_n}{t_n}$, which is the weak$^*$ limit of the sequence $(x_{l,n}^*)_{l=1}^\infty$. In either case we have $x^* \restriction \mathbf{N}_n = t_n x_n^*$ where $t_n \in [0,1]$ and $x_n^* \in A_n^{(k+1)}$. It remains to show that $\sum_{n=1}^\infty t_n \leq 1$. For this we use the Fatou's lemma:
    \begin{align*}
        \sum_{n=1}^\infty t_n = \sum_{n=1}^\infty \lim_{l \rightarrow \infty} t_{l,n} \leq \liminf_{l \rightarrow \infty} \sum_{n=1}^\infty t_{l,m} \leq 1.
    \end{align*}
\end{proof}

\begin{lemma}\label{LemmaOrderAZdola}
    The order of $A$ is at least $\omega + 1$.
\end{lemma}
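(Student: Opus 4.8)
The plan is to pin down a single functional $x^* \in A^{(\omega+1)} \setminus A^{(\omega)}$. Since $A^{(\omega)} = \bigcup_{k<\omega} A^{(k)}$, this alone suffices: if some finite stage satisfied $A^{(k)} = A^{(k+1)}$, then every later stage would coincide with $A^{(k)}$, in particular $A^{(\omega)} = A^{(k)} = A^{(\omega+1)}$, a contradiction; hence $A^{(\alpha)} \subsetneq A^{(\alpha+1)}$ for every $\alpha \le \omega$ and the order of $A$ is at least $\omega+1$.

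For the witness I would take $x^* = \sum_{n=1}^\infty 2^{-n} z_{i_n}^*$. The series converges absolutely in $Z^*$, since $\sum_n 2^{-n}\norm{z_{i_n}^*} \le C_3\sum_n 2^{-n} < \infty$, and $x^* \in K$. Observe first that a finite sub-sum $\sum_{n=1}^N 2^{-n}z_{i_n}^*$, once normalized, is a convex combination of the $z_{i_n}^*$, each of which lies in $A_n^{(n+1)} \subseteq A^{(n+1)} \subseteq A^{(\omega)}$ by Lemma \ref{LemmaOrderAnZdola}, by monotonicity of the weak$^*$ derived-set operation (as $A_n \subseteq A$), and by convexity of $A^{(\omega)}$; so finitely supported combinations stay inside $A^{(\omega)}$, which is exactly why the full infinite combination is needed. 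To check $x^* \in A^{(\omega+1)} = (A^{(\omega)})^{(1)}$, put $u_N^* = (1-2^{-N})^{-1}\sum_{n=1}^N 2^{-n}z_{i_n}^*$, a convex combination of the $z_{i_n}^*$ and hence an element of $A^{(\omega)}$; then $(u_N^*)_N$ is bounded by $C_3$ and converges in norm, a fortiori weak$^*$, to $x^*$, so $x^* \in A^{(\omega+1)}$.

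The crux is showing $x^* \notin A^{(\omega)}$, i.e. $x^* \notin A^{(k)}$ for every finite $k$. Suppose $x^* \in A^{(k)}$ and fix any $n > k$. Because the blocks $\mathbf{N}_m$ are pairwise disjoint and $z_{i_m}^*$ is supported on $\{i_m\}\subseteq\mathbf{N}_m$, the restriction satisfies $x^* \restriction \mathbf{N}_n = 2^{-n}z_{i_n}^*$. Lemma \ref{LemmaARestrikce} then yields $s\in[0,1]$ and $w^* \in A_n^{(k)}$ with $2^{-n}z_{i_n}^* = s\,w^*$; since the left side is nonzero, $s>0$ and $w^* = 2^{-n}s^{-1}z_{i_n}^*$. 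As $k \le n-1$, Proposition \ref{PropRovnostiAnm} applies to $w^*\in A_n^{(k)}$ and gives $1 = \sum_{j_1\in\mathbf{N}(i_n),\,j_2\in\mathbf{N}(i_n,j_1)} w^*(z_{j_2})/\beta_{j_1}$; but every index $j_2$ occurring here lies outside $\mathbf{N}_0$, hence $j_2 \ne i_n$, so $w^*(z_{j_2}) = 2^{-n}s^{-1}z_{i_n}^*(z_{j_2}) = 0$ and the right-hand side vanishes. This contradiction completes the argument.

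I do not expect a genuine obstacle to survive at this stage: the substantive work is already carried out in Lemma \ref{LemmaARestrikce}, which localizes membership in $A^{(k)}$ to the separate coordinate blocks $\mathbf{N}_n$, and in Proposition \ref{PropRovnostiAnm}, which forbids $A_n^{(k)}$ from containing any positive multiple of $z_{i_n}^*$ once $k < n$. The only routine points to verify with some care are the convexity and monotonicity of the weak$^*$ derived-set operation and the boundedness of the approximating sequence $(u_N^*)$.
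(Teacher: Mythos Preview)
Your proof is correct and follows essentially the same route as the paper: the same witness $\sum_n 2^{-n} z_{i_n}^*$, the same use of Lemma~\ref{LemmaARestrikce} to localize to a block $\mathbf{N}_n$ with $n>k$, and the same appeal to Proposition~\ref{PropRovnostiAnm} to derive a contradiction. The only cosmetic difference is that the paper invokes the second (homogeneous) equation $x^*(z_{i_n}) = \sum x^*(z_{j_2})\alpha_{j_1}/\beta_{j_1}$ applied to a rescaling, whereas you use the first equation $1 = \sum w^*(z_{j_2})/\beta_{j_1}$ directly on $w^*\in A_n^{(k)}$; both yield the same $1=0$.
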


\begin{proof}
    Consider the element $z^* = \sum_{n=1}^\infty 2^{-n} z_{i_n}^*$. Then $z^* \in \overline{A^{(\omega)}}$ as it is an infinite convex combination of the elements $z_{i_n}^*$ and by Lemma \ref{LemmaOrderAnZdola} we have that $z_{i_n}^* \in A_n^{(n+1)} \subseteq A^{(\omega)}$. Hence, we need to prove that $z^*$ is not an element of $A^{(\omega)}$, that is to prove that it is not an element of any $A^{(m)}$, $m \in \mathbb{N}$. Suppose for a contradiction that $z^* \in A^{(m)}$ for some $m \in \mathbb{N}$. Then by Lemma \ref{LemmaARestrikce} we have that
    \begin{align*}
        2^{-m-1} z_{i_{m+1}}^* =  z^* \restriction \mathbf{N}_{m+1} = t \: z_{m+1}^* \hspace{1cm} \text{for some } t \in (0,1], \; z_{m+1}^* \in A_{m+1}^{(m)}.
    \end{align*}
    In other words, $z_{i_{m+1}}^*$ is a positive multiple of an element of $A_{m+1}^{(m)}$. But then by Proposition \ref{PropRovnostiAnm} we have
    \begin{align*}
        1 = z_{i_{m+1}}^*(z_{i_{m+1}}) = \sum_{\substack{j_1 \in \mathbf{N}(i_{m+1}) \\ j_2 \in \mathbf{N}(i_{m+1},j_1)}} \frac{z_{i_{m+1}}^*(z_{j_2}) \alpha_{j_1}}{\beta_{j_1}} = 0,
    \end{align*}
    as $m \leq (m+1)-1$. But this is a contradiction. Hence, $z^* \notin A^{(\omega)}$.
\end{proof}

\begin{lemma} \label{LemmaOrderAShora}
    The order of $A$ is at most $\omega + 1$. Specifically $\overline{A}^{w^*} = \overline{A^{(\omega)}}$.
\end{lemma}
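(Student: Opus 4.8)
The plan is to prove the nontrivial inclusion $\overline{A}^{w^*}\subseteq\overline{A^{(\omega)}}$; the reverse one holds since $A^{(\omega)}\subseteq\overline{A}^{w^*}$ and weak$^*$-closed convex sets are norm closed, and once equality is known the order bound follows at once: a norm-convergent sequence in $A^{(\omega)}$ is bounded, so $\overline{A^{(\omega)}}\subseteq (A^{(\omega)})^{(1)}=A^{(\omega+1)}\subseteq\overline{A}^{w^*}$, whence $A^{(\omega+1)}=\overline{A}^{w^*}$ is weak$^*$ closed, $A^{(\omega+2)}=A^{(\omega+1)}$, and (with Lemma \ref{LemmaOrderAZdola}) the order is exactly $\omega+1$.

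The key preparation is to introduce, for each $n$, the functionals $\delta_n,\gamma_n$ on $K$ which are the functions $\delta,\gamma$ of the proof of Lemma \ref{LemmaOrderAnShora} (depending on $n$ through $i_n$), and to set $\tau_n(y^*)=1-\delta_n(y^*)=\sum_{j_1\in\mathbf{N}(i_n),\,j_2\in\mathbf{N}(i_n,j_1)}y^*(z_{j_2})/\beta_{j_1}$. Since $0\le\alpha_{j_1}<1$, the functionals $\tau_n$, $\tau_n+\gamma_n=y^*(z_{i_n})+\sum_{j_1,j_2}y^*(z_{j_2})(1-\alpha_{j_1})/\beta_{j_1}$ and $\sum_n(\tau_n+\gamma_n)$ are suprema of finite partial sums of nonnegative weak$^*$-continuous functionals, hence weak$^*$ lower semicontinuous on $K$; correspondingly $\gamma_n$ and $\delta_n-\gamma_n=1-(\tau_n+\gamma_n)$ are weak$^*$ upper semicontinuous on $K$. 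From Proposition \ref{PropCharakterizaceAn} and the definition of $A$ one checks that on $A$ one has $\gamma_n\equiv0$, $\delta_n\ge\gamma_n$, and $\sum_n(\tau_n+\gamma_n)\equiv1$; by lower semicontinuity of $\sum_n(\tau_n+\gamma_n)$ and upper semicontinuity of $\gamma_n$ and $\delta_n-\gamma_n$, these pass into the weak$^*$ closure, so every $z^*\in\overline{A}^{w^*}\subseteq K$ satisfies $\gamma_n(z^*)\ge0$ and $\delta_n(z^*)\ge\gamma_n(z^*)$ for all $n$, and moreover $\sum_n(\tau_n(z^*)+\gamma_n(z^*))\le1$. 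I would also record a sharpening of Lemma \ref{LemmaOrderAnShora}: $\overline{A_n}^{w^*}$ is precisely the set of $y^*\in K$ with $\operatorname{supp}(y^*)\subseteq\mathbf{N}_n$ and $\delta_n(y^*)\ge\gamma_n(y^*)\ge0$. The inclusion ``$\subseteq$'' is contained in that proof; for ``$\supseteq$'' one approximates such a $y^*$ in norm by $(1-k^{-1})$ times its partial sums, which have finite support and (using that taking partial sums does not decrease $\gamma_n$ or $\delta_n-\gamma_n$) satisfy $\delta_n>\gamma_n\ge0$, hence lie in $A_n^{(n)}$ exactly as in the last paragraph of that proof. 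In particular $\overline{A_n}^{w^*}$ is stable under multiplication by $[0,1]$.

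Now take $z^*\in\overline{A}^{w^*}$. Since $z^*\in K$, Lemma \ref{LemmaKuzel} makes $v_n^*:=z^*\restriction\mathbf{N}_n$ well defined with $z^*=\sum_{n=1}^\infty v_n^*$ in norm, and by the previous paragraph each $v_n^*\in\overline{A_n}^{w^*}$ while $\sum_n(\tau_n(v_n^*)+\gamma_n(v_n^*))\le1$. Fix $M\in\mathbb{N}$ and $\epsilon\in(0,1)$. As $(1-\epsilon)\sum_{n\le M}(\tau_n(v_n^*)+\gamma_n(v_n^*))\le1-\epsilon<1$, I can choose $\lambda_1,\dots,\lambda_M>0$ with $\sum_{n\le M}\lambda_n=1$ and $\lambda_n\ge(1-\epsilon)(\tau_n(v_n^*)+\gamma_n(v_n^*))$ for each $n\le M$; then $w_n^*:=(1-\epsilon)v_n^*/\lambda_n$ lies in $K$, is supported in $\mathbf{N}_n$, and satisfies $\gamma_n(w_n^*)=(1-\epsilon)\gamma_n(v_n^*)/\lambda_n\ge0$ together with $\delta_n(w_n^*)-\gamma_n(w_n^*)=1-(1-\epsilon)(\tau_n(v_n^*)+\gamma_n(v_n^*))/\lambda_n\ge0$, so by the sharpened description $w_n^*\in\overline{A_n}^{w^*}=A_n^{(n+1)}\subseteq A^{(n+1)}\subseteq A^{(M+1)}$ (Lemma \ref{LemmaOrderAnShora}). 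Hence $(1-\epsilon)\sum_{n\le M}v_n^*=\sum_{n\le M}\lambda_n w_n^*$ is a convex combination of elements of the convex set $A^{(M+1)}$, so it lies in $A^{(M+1)}\subseteq A^{(\omega)}$. Letting $\epsilon\to0^+$ gives $\sum_{n\le M}v_n^*\in\overline{A^{(\omega)}}$, and letting $M\to\infty$ gives $z^*\in\overline{A^{(\omega)}}$, which completes the proof.

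The heart of the argument is the sharpened description of $\overline{A_n}^{w^*}$ together with the realization that it is $\tau_n+\gamma_n$, rather than $\tau_n$ alone, that is the conserved, lower-semicontinuous quantity measuring the convex-combination budget consumed by each restriction $v_n^*$; this is what yields $\sum_n(\tau_n(v_n^*)+\gamma_n(v_n^*))\le1$ and makes the reassembly of $z^*$ possible. The remaining issues — that $\overline{A_n}^{w^*}$ and the restrictions $v_n^*$ may have infinite support, and the small factor $1-\epsilon$ needed to create strict slack so the weights $\lambda_n$ can be taken strictly positive — are routine once this is in place.
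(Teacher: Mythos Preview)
Your proof is correct, and it takes a genuinely different route from the paper's.

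The paper introduces the auxiliary set
\[
B=\Bigl\{\sum_{n=1}^\infty t_n x_n^*;\ x_n^*\in\overline{A_n}^{w^*},\ t_n\in[0,1],\ \sum t_n\le1\Bigr\},
\]
shows $B\subseteq\overline{A^{(\omega)}}$ (using $0\in A^{(\omega)}$ to pad partial convex combinations), and then proves $B$ is weak$^*$ closed by verifying $B=B^{(1)}$ via the restriction Lemma~\ref{LemmaKuzelRestrikce} and invoking Krein--\v{S}mulyan; this gives $\overline{A}^{w^*}\subseteq B$ at one stroke. You instead extract a \emph{sharpened description} of $\overline{A_n}^{w^*}$ (namely, all $y^*\in K$ supported in $\mathbf N_n$ with $\delta_n(y^*)\ge\gamma_n(y^*)\ge0$) and identify the weak$^*$ lower-semicontinuous ``budget'' functional $\sum_n(\tau_n+\gamma_n)$ whose bound $\le1$ survives into $\overline{A}^{w^*}$; this lets you place each restriction $v_n^*=z^*\!\restriction\!\mathbf N_n$ directly into $\overline{A_n}^{w^*}$ and reassemble $z^*$ as a norm limit of genuine convex combinations, bypassing Krein--\v{S}mulyan entirely. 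What your approach buys is an explicit intrinsic characterization of $\overline{A_n}^{w^*}$ (of possible independent interest) and a transparent reason \emph{why} the decomposition works, namely the conservation law $\sum_n(\tau_n+\gamma_n)\le1$; what the paper's approach buys is economy, since one avoids the semicontinuity bookkeeping and the slack parameter $\epsilon$ by appealing to a single closedness statement. The two arguments are equivalent in strength: your set $\{y^*\in K:\operatorname{supp}y^*\subseteq\bigcup_n\mathbf N_n,\ \delta_n(y^*)\ge\gamma_n(y^*)\ge0\ \forall n,\ \sum_n(\tau_n+\gamma_n)(y^*)\le1\}$ is exactly the paper's $B$, and your semicontinuity observations are precisely what makes $B$ weak$^*$ closed.
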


\begin{proof}
    First we notice that for each $n \in \mathbb{N}$ it holds that $0$ is an element of $A_n^{(\omega)}$ as $0 = \alpha_j z_{i_n}^* \in A_n^{(n)}$, where $j$ is the first element of $\mathbf{N}(i_n)$ (see the paragraph preceding the definition of $A_n$). 
    
    Set
    \begin{align*}
        B = \left\{ \sum_{n=1}^\infty t_n x_n^*; \; x_n^* \in \overline{A_n}^{w^*}, \; t_n \in [0,1], \; \sum_{n=1}^\infty t_n \leq 1 \right\}.
    \end{align*}
    Then $B$ is a subset of $\overline{A^{(\omega)}}$. To see this, cosider $x^* = \sum_{n=1}^\infty t_n x_n^* \in B$, where $x_n \in \overline{A_n}^{w^*}$, $t_n \in [0,1]$ and $\sum_{n=1}^\infty t_n \leq 1$. Now we will show that for each $N \in \mathbb{N}$ the partial sum $y_N^* = \sum_{n=1}^N t_n x_n^*$ is an element of $A^{(\omega)}$. By Lemma \ref{LemmaOrderAnShora} we have for each $n=1,\dots,N$ that $\overline{A_n}^{w^*} = A_n^{(N+1)}$. Hence, for these $n=1,\dots,N$ we have $x_n^* \in \overline{A_n}^{w^*} = A_n^{(N+1)} \subseteq A^{(N+1)} \subseteq A^{(\omega)}$. But then, as $0 \in A^{(\omega)}$,
    \begin{align*}
        y_N^* = \sum_{n=1}^N t_n x_n^* + \left( 1 - \sum_{n=1}^N t_n \right) \cdot 0 \in \operatorname{conv} A^{(\omega)} = A^{(\omega)}.
    \end{align*}
    Therefore $x^* = \lim_{N \rightarrow \infty} y_N^* \in \overline{A^{(\omega)}}$ and $B \subseteq \overline{A^{(\omega)}}$.
    
    Now we show that $B$ is actually already weak$^*$ closed. As $B$ is convex, it is enough to show that $B = B^{(1)}$ by the Krein-Šmulyan theorem. Let us have a sequence $(x_k^*)_{k=1}^\infty$ in $B$ which weak$^*$ converges to $x^* \in B^{(1)}$. We want to show that $x^* \in B$. As $x_k^* \in B$, we can write it as $x_k^* = \sum_{n=1}^\infty t_{k,n} x^*_{k,n}$ with $x_{k,n}^* \in \overline{A_n}^{w^*}$, $t_{k,n} \in [0,1]$ and $\sum_{n=1}^\infty t_{k,n} \leq 1$. By Lemma \ref{LemmaKuzelRestrikce} it holds for each $n \in \mathbb{N}$ that
    \begin{align*}
        t_{k,n} x_{k,n}^* = x_k^* \restriction \mathbf{N}_n \overset{w^*}{\underset{k \rightarrow \infty}{\longrightarrow}} x^* \restriction \mathbf{N}_n.
    \end{align*}
    Now we can, using the diagonal argument to pass to a subsequence if necessary, assume that for each $n \in \mathbb{N}$ it holds that $t_{k,n} \underset{k \rightarrow \infty}{\longrightarrow} t_n$ for some $t_n \in [0,1]$. Set $y_n^* = 0$ if $t_n = 0$ and otherwise set $y_n^* =\frac{x^* \restriction \mathbf{N}_n}{t_n}$, which is the weak$^*$ limit of the sequence $(x_{k,n}^*)_{k=1}^\infty$. In either case we get that $x^* \restriction \mathbf{N}_n = t_n y_n^*$, where $y_n^* \in \left(\overline{A_n}^{w^*}\right)^{(1)} = \overline{A_n}^{w^*}$, $t_n \in [0,1]$ and $\sum_{n=1}^\infty t_n \leq 1$ (where the last inequality follows again from the Fatou's lemma). Now we notice that $x^* = \sum_{n=1}^\infty \left(x^* \restriction \mathbf{N}_n \right)$, as the series $x = \sum_{n=1}^\infty x^*(z_n)z_n^*$ is absolutely convergent and the sets $\mathbf{N}_n$, $n \in \mathbb{N}$, form a partition of $\mathbb{N}$.

    Now, as obviously $A \subseteq B$, we have
    \begin{align*}
        B \subseteq \overline{A^{(\omega)}} \subseteq \overline{A}^{w^*} \subseteq \overline{B}^{w^*} = B.
    \end{align*}
    Therefore we have equalities and specifically $\overline{A}^{w^*} = \overline{A^{(\omega)}}$.
\end{proof}

Now we are all prepared to prove the second main theorem of this paper.

\begin{theorem} \label{ThmRadOmega}
    Let $X$ be a non-reflexive Banach space. Then there is a convex set $B \subseteq X^*$ of order $\omega + 1$.
\end{theorem}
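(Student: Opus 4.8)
The plan is to repeat the proof of Theorem \ref{ThmRadn}, replacing the finite order $n$ by $\omega+1$ and the auxiliary set $A_{n-1}$ by the set $A = \operatorname{conv}\bigcup_{n=1}^\infty A_n \subseteq Z^*$. First I would apply Lemma \ref{LemmaBaze} to obtain a closed non-reflexive subspace $Z \subseteq X$ carrying a seminormalized basis with bounded partial sums; this is exactly the ambient setting in which the sets $A_n$ and $A$ were constructed above. By Lemma \ref{LemmaOrderAZdola} we have $A^{(\omega)} \subsetneq A^{(\omega+1)}$, and by Lemma \ref{LemmaOrderAShora} the order of $A$ is at most $\omega+1$, so in particular $A^{(\omega+1)} = A^{(\omega+2)}$; together these say that the convex set $A$ has order exactly $\omega+1$.

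Next I would transport this to $X^*$ through the identity embedding $E \colon Z \to X$, setting $B = (E^*)^{-1}(A) \subseteq X^*$. The operator $E^*$ is linear and weak$^*$-to-weak$^*$ continuous, so $B$ is convex. Lemma \ref{LemmaPrevod} gives $B^{(\alpha)} = (E^*)^{-1}(A^{(\alpha)})$ for every ordinal $\alpha$; taking $\alpha = \omega$, $\alpha = \omega+1$ and $\alpha = \omega+2$ yields $B^{(\omega)} \subseteq B^{(\omega+1)} = B^{(\omega+2)}$, whence the order of $B$ is at most $\omega+1$. For the reverse inequality I would use that $E^* \colon X^* \to Z^*$ is the restriction map $x^* \mapsto x^*|_Z$, which is onto by the Hahn--Banach theorem; since the preimage under a surjection of a proper inclusion is again proper, the strict inclusion $A^{(\omega)} \subsetneq A^{(\omega+1)}$ forces $B^{(\omega)} \subsetneq B^{(\omega+1)}$, so the order of $B$ is at least $\omega+1$. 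Hence $B$ is a convex subset of $X^*$ of order exactly $\omega+1$.

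All the substantial work---the description of $A_n$ in Proposition \ref{PropCharakterizaceAn}, the partial equations of Proposition \ref{PropRovnostiAnm}, and the two order estimates for $A$ in Lemmata \ref{LemmaOrderAZdola} and \ref{LemmaOrderAShora}---is already in place, so this final step is purely formal and I do not anticipate a genuine obstacle. The only point I would take care to state explicitly is the surjectivity of $E^*$, which is exactly what promotes the inclusion $B^{(\omega)} \subseteq B^{(\omega+1)}$ to a proper one; if one also wants $B^{(\omega+1)} = \overline{B}^{w^*}$ it suffices to note that $B^{(\omega+1)} = (E^*)^{-1}(A^{(\omega+1)}) = (E^*)^{-1}(\overline{A}^{w^*})$ is a preimage of a weak$^*$ closed set under a weak$^*$-to-weak$^*$ continuous map, hence weak$^*$ closed.
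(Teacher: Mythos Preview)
Your proposal is correct and follows essentially the same route as the paper: pass to the subspace $Z$ via Lemma~\ref{LemmaBaze}, invoke Lemmata~\ref{LemmaOrderAZdola} and~\ref{LemmaOrderAShora} to see that $A$ has order exactly $\omega+1$, and then pull back along $E^*$ using Lemma~\ref{LemmaPrevod}. Your explicit mention of the surjectivity of $E^*$ (via Hahn--Banach) to preserve the \emph{strict} inclusion, and of weak$^*$-to-weak$^*$ continuity to obtain $B^{(\omega+1)} = \overline{B}^{w^*}$, are welcome clarifications that the paper's terse proof leaves implicit.
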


\begin{proof}
    Lemma \ref{LemmaBaze} gives us a subspace $Z$ of the space $X$ with semi-normalized basis $(z_n)_{n=1}^\infty$ with bounded partial sums. By Lemmata \ref{LemmaOrderAZdola} and \ref{LemmaOrderAShora} there is a convex subset $A$ of $Z^*$ for which $A^{(\omega)} \subsetneq A^{(\omega + 1)} = \overline{A}^{w^*}$. Let us consider the identity embedding $E:Z \rightarrow X$ and define $B = (E^*)^{-1} (A_{n-1})$. Then Lemma \ref{LemmaPrevod} gives us
    \begin{align*}
        B^{(\omega)} \subsetneq B^{(\omega + 1)} = \overline{B}^{w^*}.
    \end{align*}
\end{proof}

\section{Remarks and open problems}

The order of any subset of the dual of a separable space must be a countable ordinal (see e.g. \cite{HumphreysSimpson}). It follows from the Baire category theorem, that the order of a subspace of the dual of a separable Banach space cannot be a limit ordinal. This approach, however, cannot be used for convex sets. So the following question still remains open.

\begin{question}
    Can the order of a convex subset of the dual to a separable Banach space be a limit ordinal?
\end{question}

Ostrovskii proved in \cite{ostro91} that in the dual of any non-quasi-reflexive separable Banach space we can find for any non-limit ordinal $\alpha <\omega_1$ a subspace of order $\alpha$. Can we prove an analogous statement for convex subsets of duals of non-reflexive quasi-reflexive Banach spaces?

\begin{question}
    Let $X$ be a non-reflexive quasi-reflexive Banach space. Are there any convex subsets of $X^*$ with order higher than $\omega + 1$?
\end{question}


\begin{thebibliography}{9}

\bibitem{DavisLinden}
  W. J. Davis, J. Lindenstrauss,
  \textit{On total nonnorming subspaces}, Trans. Amer. Math. Soc. \textbf{348} (1996), no. 10, 4231–4255.

\bibitem{HumphreysSimpson}
  A. J. Humphreys, S. G. Simpson,
  \textit{Separable Banach space theory needs strong set existence axioms}, Trans. Amer. Math. Soc. \textbf{348} (1996), no. 10, 4231–4255.

\bibitem{james1951}
  R. C. James,
  \textit{A Non-Reflexive Banach Space Isometric With Its Second Conjugate Space.}
  Proceedings of the National Academy of Sciences of the United States of America \textbf{37} (1951), no. 3, 174–77.

\bibitem{ostro11}
  M. I. Ostrovskii,
  \textit{$Weak^*$ closures and derived sets in dual Banach spaces},
  Note Mat. \textbf{31} (2011), no. 1, 129–138.

\bibitem{ostro91}
  M. I. Ostrovskii,
  \textit{$w^*$-derived sets of transfinite order of subspaces of dual Banach spaces},
  Dokl. Akad. Nauk Ukrain. SSR. Ser. A, (1987), no. 10, 9-12.
  
\bibitem{pelczynski1962}
  A. Pełczyński,
  \textit{A note on the paper of I. Singer "Basic sequences and reflexivity of Banach spaces},
  Studia Mathematica \textbf{21.3} (1962): 370-374.

\bibitem{singer1962}
  I. Singer,
  \textit{Basic sequences and reflexivity of Banach spaces}, Studia Math., \textbf{21}
  (1961/1962), 351–369.

\end{thebibliography}
\end{document}